\documentclass[11pt,reqno]{amsart}

\synctex=1
\usepackage{amscd,amssymb,amsmath,amsthm}
\usepackage{graphicx}
\usepackage{color}
\usepackage{cite}
\topmargin=0.1in \textwidth5.8in \textheight7.7in
\usepackage[shortlabels]{enumitem}

\newtheorem{thm}{Theorem}
\newtheorem{defn}{Definition}
\newtheorem{lemma}{Lemma}
\newtheorem{pro}{Proposition}
\newtheorem{rk}{Remark}

\usepackage{todonotes}

\numberwithin{equation}{section} \setcounter{tocdepth}{1}


\def\s{\sigma}

\def\s{\sigma}

\def\O{\Omega}

\def\s{\sigma}

\def\O{\Omega}

\def\m{\mu}
\def\n{\nu}
\def\L{\Lambda}

\def\Z{\mathbb{Z}}

\def \L {\Lambda}


\begin{document}
\title[Gradient Gibbs measures for the SOS model]{
Gradient Gibbs measures
for the SOS model with countable values on a Cayley tree
}

\author{F. Henning, C. K\"ulske,   A. Le Ny, U. A. Rozikov}

\address{F. Henning \\ Fakult\"at f\"ur Mathematik,
Ruhr-University of Bochum, Postfach 102148,\,
44721, Bochum,
Germany.}
\email{Florian.Henning@ruhr-uni-bochum.de}

\address{C.\ K\"ulske\\ Fakult\"at f\"ur Mathematik,
Ruhr-University of Bochum, Postfach 102148,\,
44721, Bochum,
Germany.}
\email {Christof.Kuelske@ruhr-uni-bochum.de}

\address{A. \ Le Ny \\ Universit\'e Paris-Est, Laboratoire d'Analyse et de Math\'ematiques Appliqu\'ees, LAMA UMR CNRS 8050, UPEC, 91 Avenue du G\'en\'eral de Gaulle, 94010 Cr\'eteil cedex, France.}
\email {arnaud.le-ny@u-pec.fr}

\address{U.\ A.\ Rozikov\\ Institute of mathematics,
81, Mirzo Ulug'bek str., 100125, Tashkent, Uzbekistan.}
\email {rozikovu@yandex.ru}

\begin{abstract} We consider an SOS (solid-on-solid) model,
with spin values from the set of all integers,  
on a Cayley tree of order $k\geq 2$ and are interested
in translation-invariant gradient Gibbs measures (GGMs) of the model. 
Such a measure corresponds to a boundary law (a function defined on vertices of the Cayley tree)
satisfying a functional equation.
In the ferromagnetic SOS case on the binary tree we find up to
five solutions to a class of $4$-periodic boundary law equations (in particular, some two periodic ones).
We show that these boundary laws define up to four distinct GGMs.
Moreover, we construct some $3$-periodic boundary laws on the Cayley tree of arbitrary order $k\geq 2$, which 
define GGMs different from the $4$-periodic ones.

\end{abstract}
\maketitle

{\bf Mathematics Subject Classifications (2010).} 82B26 (primary);
60K35 (secondary)

{\bf{Key words.}} {\em SOS model, Cayley tree,
Gibbs measure, tree-indexed Markov chain, gradient Gibbs measures, boundary law}. 

\section{Introduction}
We consider models where an infinite-volume spin-configuration $\omega$ is a function from the vertices of the tree Cayley
 to the local configuration space $E \subseteq \Z$.

A solid-on-solid (SOS) model is a spin system with spins taking values in (a subset of) the
integers, and formal Hamiltonian
$$
 H(\sigma)=-J\sum_{\langle x,y\rangle}
|\omega(x)-\omega(y)|,
$$
where $J\in \mathbb{R}$ is a coupling constant.
 As usual,
$\langle x,y\rangle$ denotes a pair of nearest neighbour vertices.

For the local configuration space we consider in the present paper the full set  $E:=\mathbb Z$.
The model can be considered as a generalization
of the Ising model, which corresponds to $E=\{-1,1\}$, or a less symmetric variant of the Potts model 
with non-compact state space.
SOS-models on the cubic lattice were analyzed in \cite{Maz} where an analogue of
the so-called Dinaburg--Mazel--Sinai theory was developed.
Besides interesting phase transitions in these models, the
attention to them is motivated by applications, in particular
in the theory of communication networks; see, e.g., \cite{Kel}, \cite{Ra}.
SOS models with $E=\Z$ have also been used as simplified discrete
interface models which should approximate the behaviour of a Dobrushin-state
in an Ising model when the underlying graph is $\Z^d$, and $d\geq 2$.
There is the issue of possible
non-existence of any Gibbs measure in the case of such unbounded spins, 
in particular in the additional presence of {\em  disorder} (see \cite{BK} and \cite{BK1}). 
In this paper we show that on the Cayley 
tree there are several translation invariant {\em gradient Gibbs measures}.
For more background on Gradient Gibbs measures 
on the lattice, also in the case of 
real valued state space, we refer to 
 \cite{FS97}, \cite{BiKo}, \cite{EnKu08}, \cite{CoKu12}, \cite{CoKu15} and \cite{BEvE}.

Compared to the Potts model, the $m$-state SOS model has
less symmetry: The full symmetry of the Hamiltonian
under joint permutation of the spin values is reduced to the mirror symmetry,
which is the invariance of the model under the map $\omega_i \mapsto m- \omega_i$ on the local spin space.
Therefore one expects a more diverse structure
of phases. 

To the best of our knowledge, the first paper devoted to the SOS model on the Cayley tree is \cite{Ro12}.
In \cite{Ro12} the case of  arbitrary $m\geq 1$ is treated
and
a vector-valued functional equation for possible boundary laws of the model is obtained.
Recall that each solution to this functional equation determines a splitting Gibbs measure (SGM),
in other words a tree-indexed Markov chain which is also a Gibbs measure. 
Such measures can be obtained by propagating
spin values along the edges of the tree, from {\em any site singled out to be the root} to the outside,
with a transition matrix depending on initial
Hamiltonian and the boundary law solution.
In particular the homogeneous (site-independent) boundary laws then
define translation-invariant (TI) SGMs. For a recent investigation of the influence of
weakly non-local perturbations in the interaction to the structure of Gibbs measures, 
see \cite{BEvE} in the context of the Ising model. 

Also the symmetry (or absence of symmetry) of the Gibbs measures under spin reflection 
is seen in terms of the corresponding boundary law. For SOS models some
TISGMs which are symmetric have already been studied 
in the particular case $m=2$  in \cite{Ro12}, and   $m=3$  in   \cite{Ro13}. In \cite{KR1}, for $m=2$, 
a detailed description of TISGMs (symmetric and non-symmetric ones) is given: it is shown the uniqueness in the case 
of antiferromagnetic interactions, and existence of up to seven
TISGMs in the case of ferromagnetic interactions.  See also \cite{Ro} for more details about SOS models on trees.

 In the situation of an unbounded local spin space the normalisability condition given in \cite{Z1} (which is needed to construct a SGM, in other words  a tree indexed Markov chain,  from a given boundary law solution) is {\em not} automatically satisfied anymore.
In this paper we are interested in the class of (spatially homogeneous/ tree-automorphism invariant) height-periodic boundary laws to tree-automorphism invariant potentials whose elements violate this normalisability condition. Here, a spatially homogeneous height-periodic boundary law with period $q$ is a $q$-periodic function on the local state space $\mathbb{Z}$. Although the procedure of constructing a Gibbs measures from boundary laws described in \cite{Z1} can not be applied to elements of that class, we are still able to assign a translational invariant \textit{gradient Gibbs measure} (GGM) on the space of gradient configurations to each such spatially 
homogeneous height-periodic boundary law, compare \cite{KS}. This motivates the study of spatially homogeneous height-periodic boundary laws as useful finite-dimensional objects which are are easier to handle than the non-periodic ones required to fulfill the normalisability condition.
Gradient Gibbs measures describe height differences, Gibbs measures describe 
absolute heights. Each Gibbs measure defines a gradient Gibbs measures, but the converse 
in not true, which is a phenomenon that is well-known from the lattice.
Some more explanation will be given in the following sections.  The main goal of this paper then consists in the description of a class of boundary solutions which have periods of 
$2$, $3$ and $4$ with respect to shift in the height direction on the local state space $\Z$, 
and their associated GGMs.

The paper is organized as follows. In Section 2 we first present the preliminaries of our model. Section 3 then contains a summary on the notion of GGMs on trees and their construction from homogeneous periodic boundary laws. For further details see \cite{KS}.
The main part, section 4,  is devoted to the description of a set of homogeneous $2$, $3$ and $4$-periodic boundary laws. Solving the associated boundary law equations for the $2$-periodic and the $4$-periodic case on the binary tree we prove that depending on the system parameters this set contains one up to five elements, yet the number of distinct GGMs assigned to them will turn out to be at most four. In the last subsection we construct GGMs for 3-periodic boundary laws on the $k$-regular tree for arbitrary $k \geq 2$.

\section{Preliminaries}

{\it Cayley tree.} The Cayley tree $\Gamma^k$ of order $ k\geq 1 $ (or $k$-\textit{regular tree})
is an infinite tree, i.e. a locally finite connected graph without cycles, such that
exactly $k+1$ edges originate from each vertex. Let $\Gamma^k=(V,
L)$ where $V$ is the set of vertices and  $L$ the set of edges.
Two vertices $x,y \in V$ are called {\it nearest neighbours} if
there exists an edge $l \in L$ connecting them. We will use the
notation $l=\langle x,y\rangle$. A collection of nearest neighbour
pairs $\langle x,x_1\rangle, \langle x_1,x_2\rangle,...,\langle
x_{d-1},y\rangle$ is called a {\it path} from $x$ to $y$. The
distance $d(x,y)$ on the Cayley tree is the number of edges of the
shortest path from $x$ to $y$.

Furthermore, for any $\Lambda \subset V$ we define its outer boundary as
 \begin{equation*}
\partial \Lambda := \{ x \notin \Lambda : d(x,y) = 1 \mbox{ for some } y \in \Lambda\}.
 \end{equation*}

{\it SOS model.} We consider a model where the spin takes values in
the set of all integer numbers $\mathbb Z:=\{\dots, -1,0,1,\dots
\}$, and is assigned to the vertices of the tree. A \textit{(height) configuration}
$\omega$ on $V$ is then defined as a function $x\in V\mapsto\omega_x\in\mathbb Z$; the set of all height configurations is $\Omega:=\mathbb Z^V$. Take the power set $2^\mathbb{Z}$  as measurable structure on $\mathbb{Z}$ and then endow $\Omega$ with the product $\sigma$-algebra $\mathcal{F}:=\sigma\{\omega_i \mid i \in V\}$ where $\omega_i: \Omega \rightarrow \mathbb{Z}$ denotes the projection on the $i$th coordinate. We also sometimes consider more general finite subsets $\Lambda$ of the tree and we write $\mathcal{S}$ for the set of all those finite subtrees. \\
Recall here that the (formal) Hamiltonian of the SOS model is
\begin{equation}\label{nu1}
 H(\sigma)=-J\sum_{\langle x,y\rangle\in L}
|\omega_x-\omega_y|,
\end{equation}
where $J \in \mathbb R$ is a constant which we will set to $1$ (incorporated in the inverse temperature $\beta$) in the following. As defined above, $\langle
x,y\rangle$ denotes nearest neighbour vertices.

Note that the above Hamiltonian depends only on the height difference between neighbouring vertices but not on absolute heights (it is given by a \textit{gradient interaction potential} in the terminology of \cite{KS}). This suggests reducing complexity of the configuration space by considering gradient configurations instead of height configurations as it will be explained in the following section.

 \section{Gradient Gibbs measures and an infinite system of functional equations}
\textit{Gradient configurations}: Let the Cayley tree be called $\Gamma^k$. We may induce an orientation on $\Gamma^k$ relative to an arbitrary site 
$\rho$ (which we may call the root) by calling an edge $\langle x,y \rangle$ \textit{oriented} iff it points away from the $\rho$. More precisely, the set of oriented edges is defined by
\begin{equation*}
 \vec L:=\vec{L_\rho}:= \{\langle x,y \rangle \in L \; : \; d(\rho,y)=d(\rho,x)+1\}.
\end{equation*} Note that the oriented graph $(V,\vec{L})$ also possesses all tree-properties, namely connectedness and absence of loops. \\For any height configuration $\omega = (\omega(x))_{x \in V} \in \mathbb{Z}^V$ and $b = \langle x,y \rangle \in \vec L$ the \textit{height difference} along the edge $b$ is given by $\nabla \omega_b = \omega_y - \omega_x$ and we also call $\nabla \omega$ the \textit{gradient field} of $\omega$. The gradient spin variables are now defined by $\eta_{\langle x,y \rangle} = \omega_y - \omega_x$ for each $\langle x,y \rangle \in \vec L$. Let us denote the space of \textit{gradient configurations} by $\O^\nabla = \Z^{\vec L}$. Equip the integers $\mathbb{Z}$ with the power set as measurable structure.
Having done this, the measurable structure on the space $\Omega^{\nabla}$ is given by the product $\sigma$-algebra $\mathcal{F}^\nabla:=\sigma(\{ \eta_b \, \vert \, b \in \vec{L} \})$. Clearly $\nabla: (\Omega, \mathcal{F}) \rightarrow (\Omega^\nabla, \mathcal{F}^\nabla  )$ then becomes a measurable map.

For any fixed site $x \in V$ and given spin value $\omega_x \in \mathbb{Z}$, each gradient configuration $\zeta \in \Omega^\nabla$ (uniquely) determines a height configuration by the measurable map

\begin{equation} \label{assertion}
\varphi_{x, \omega_x}: \begin{cases}
\Omega^\nabla \rightarrow \Omega \\	
(\varphi_{x, \omega_x}(\zeta))_y = \omega_x + \sum_{b \in \Gamma(x, y)} \zeta_b,
\end{cases}
\end{equation}
where $\Gamma(x,y)$ is the unique path from $x$ to $y$.  From this we get the following two statements:
\begin{enumerate}
	 \item The linear map $\nabla: \mathbb{Z}^V \rightarrow \mathbb{Z}^{\vec{L}}$ is surjective and \label{cycle}
	 \item \label{connected}
	 The kernel of $\nabla$ is given by the spatially homogeneous configurations.
\end{enumerate}
Therefore we have the identification
\begin{equation} \label{GrSp}
\O^\nabla = \Z^{\vec L}=\mathbb{Z}^V/\mathbb{Z}.
\end{equation}
Here, $=$ is meant in the sense of isomorphy between Abelian groups. Endowing $\mathbb{Z}^V/\mathbb{Z}$ with the final $\sigma$-algebra generated by the respective coset projection we can also regard this isomorphy as an isomorphy between measurable spaces due to measurability of the maps $\varphi_{x, \omega_x}$ and $\nabla$.

Note that statement \eqref{cycle} above relies on the absence of loops in trees. For gradient configurations on lattices in more than one dimension a further \textit{plaquette condition} is needed (see \cite{FS97}). In contrast to this, its following statement \eqref{connected} is based on connectedness of the tree. Therefore for any finite subtree $\Lambda \in \mathcal{S}$ the isomorphy \eqref{GrSp} between measurable spaces restricts to an isomorphy between $\mathbb{Z}^\Lambda / \mathbb{Z}$ and $\mathbb{Z}^{\{b \in \vec L \mid b \subset \Lambda\}}$ and $\mathbb{Z}^\Lambda / \mathbb{Z}$, where the sets are endowed with the respective final and product $\sigma$-algebra.

Further note that for any $w \in V$ the bijection
\begin{equation} \label{PushF}
\begin{cases}
\mathbb{Z}^V \rightarrow \mathbb{Z}^{\vec{L}} \times \mathbb{Z} \\
\omega=(\omega_x)_{x \in V} \mapsto (\nabla \omega, \omega_w )
\end{cases}
\end{equation}
is an isomorphism with respect to the product $\sigma$-algebra on $ \mathbb{Z}^{\vec{L}} \times \mathbb{Z}$, where the inverse map is given by \eqref{assertion}. In the following, this will allow us to easily identify any measure on $\mathbb{Z}^V$ with its push forward on the space $\mathbb{Z}^{\vec{L}}\times \mathbb{Z}$.

{\it Gibbs measure:} Recall that the set of height configurations $\Omega:=\mathbb{Z}^V$ was endowed with the product $\sigma$-algebra $\otimes_{i \in V}2^\mathbb{Z}$, where $2^\mathbb{Z}$ denotes the power set of $ \mathbb{Z}$.  Then for any $\Lambda \subset V$ consider the coordinate projection map $\sigma_\Lambda: \mathbb{Z}^V \rightarrow \mathbb{Z}^\Lambda$ and the $\sigma$-algebra $\mathcal{F}_\Lambda:=\sigma(\omega_\Lambda)$ of cylinder sets on $\mathbb{Z}^V$ generated by the map $\omega_\Lambda$.

Now we are ready to define Gibbs measures on the space of height-configurations for the model (\ref{nu1}) on a
Cayley tree. Let $\n=\{\n(i)>0, i\in \mathbb Z\}$ be a $\sigma$-finite  positive fixed a-priori
measure, which in the following we will always assume to be the counting measure.\\

  Gibbs measures are built within the DLR framework by describing conditional probabilities w.r.t. the outside of finite sets, where a boundary condition is frozen. One introduces a so-called Gibbsian specification $\gamma$ so that any Gibbs measure $\mu \in \mathcal{G}(\gamma)$ specified by $\gamma$ verifies
\begin{equation}\label{DLR_eq}
\mu (A | \mathcal{F}_{\Lambda^c}) = \gamma_\Lambda(A | \cdot) \quad \mu-{\rm a.s.}
\end{equation}
for all $\Lambda \in \mathcal{S}$ and $A \in \mathcal{F}$. The Gibbsian specification associated to a potential $\Phi$ is given at any inverse temperature $\beta >0$, for any boundary condition $\omega \in \Omega$ as
\begin{equation}\label{DLR-Gibbs}
 \gamma_\Lambda(A | \omega) = \frac{1}{Z_\Lambda^{\beta, \Phi }} \int e^{- \beta H^\Phi_\Lambda(\sigma_\Lambda \omega_{\Lambda^c})} \mathbf{1}_A(\sigma_\Lambda \omega_{\Lambda^c} ) \nu^{\otimes \Lambda}(d \sigma_\Lambda),
\end{equation}
where the partition function $Z_\Lambda^{\beta, \Phi }$ -- that has to be non-null and convergent in this countable infinite state-space context (this means that $\Phi$ is $\nu$-admissible in the terminology of \cite{Ge})-- is the standard normalization whose logarithm is often related to pressure or free energy. \\

In our SOS-model on the Cayley tree $\Phi$ is the unbounded nearest neighbour potential with \\$\Phi_{\{x,y\}}(\omega_x,\omega_y)=  \vert \omega_x-\omega_y \vert$ and $\Phi_{\{x\}} \equiv 0$, so $\gamma$ is a \textit{Markov specification} in the sense that
\begin{equation} \label{MSpec}
\gamma_\Lambda(\omega_\Lambda = \zeta | \cdot) \text{ is } \mathcal{F}_{\partial \Lambda}\text{-measurable for all } \Lambda \subset V \text{ and } \zeta \in \mathbb{Z}^\Lambda.
\end{equation}

In order to build up \textit{gradient specifications} from the Gibbsian specifications defined above, we need to consider the following: Due to the absence of loops in trees, for any finite $\Lambda \subset \mathbb{Z}$ the complement $\Lambda^c$ is not connected but consists of at least two connected components where each of these contains at least one element of $\partial \Lambda$. This means that the gradient field outside $\Lambda$ does not contain any information on the relative height of the boundary $\partial \Lambda$ (which is to be understood as an element of $\mathbb{Z}^{\partial \Lambda} / \mathbb{Z}$). More precisely, let $cc(\Lambda^c)$ denote the number of connected components in $ \Lambda^c$ and note that $2 \leq cc(\Lambda^c) \leq \vert \partial \Lambda \vert $.

Applying \eqref{assertion} to each connected component, an 
analogue to 
 \eqref{GrSp} becomes 
\begin{equation} \label{relboundary}
\mathbb{Z}^{\{b \in \vec{L} \, \vert \, b \subset \Lambda^c \}} \times (\mathbb{Z}^{\partial \Lambda}/\mathbb{Z}) \supset \mathbb{Z}^{\{b \in \vec{L} \, \vert \, b \subset \Lambda^c \}} \times (\mathbb{Z}^{cc( \Lambda^c)}/\mathbb{Z}) =\mathbb{Z}^{\Lambda^c} / \mathbb{Z} \subset \mathbb{Z}^V / \mathbb{Z}.
\end{equation}
where "$=$" is in the sense of isomorphy between measurable spaces.
For any $\eta \in \Omega^\nabla=\mathbb{Z}^V/\mathbb{Z}$ let $\lfloor \eta \rfloor_{\partial \Lambda} \in \mathbb{Z}^{\partial \Lambda}/\mathbb{Z}$ denote the image of $\eta$ under the coordinate projection $\mathbb{Z}^V/\mathbb{Z} \rightarrow \mathbb{Z}^{\partial \Lambda}/\mathbb{Z}$ with the latter set endowed with the final $\sigma$-algebra generated by the coset projection.
Set \begin{equation}
\mathcal{F}^\nabla_{\L}:= \sigma( (\eta_b)_{b \subset {\L}^c}) \subset \mathcal{T}^\nabla_{\L}:=\s( (\eta_b)_{b \subset {\L}^c }, \left[\eta\right]_{\partial \L}).
\end{equation}
Then $\mathcal{T}^\nabla_{\L}$ contains all information on the gradient spin variables outside $\Lambda$ and also information on the relative height of the boundary $\partial \Lambda$.
By \eqref{relboundary} we have that for any event $A \in \mathcal{F}^\nabla$ the $\mathcal{F}_{\Lambda^c}$-measurable function $ \gamma_\Lambda(A | \cdot)$ is also measurable with respect to $\mathcal{T}^\nabla_{\L}$, but in general not with respect to $\mathcal{F}^\nabla_{\L}$. These observations lead to the following:
\begin{defn}\label{d1}
	The {\em gradient Gibbs specification} is defined as the family of probability kernels $(\gamma'_\L)_{\L \subset \subset V}$ from $(\Omega^\nabla, \mathcal{T}_\L^\nabla)$ to $(\Omega^\nabla, \mathcal{F}^\nabla)$ such that
	\begin{equation}\begin{split}\label{grad}
	\int F(\rho) \gamma'_\L(d\rho \mid \zeta) = \int F(\nabla \varphi) \gamma_\L(d\varphi\mid\omega)
	\end{split}
	\end{equation}
	for all bounded $\mathcal{F}^{\nabla}$-measurable functions $F$, where $\omega \in \Omega$ is any height-configuration with $\nabla \omega = \zeta$.
\end{defn}
Using the sigma-algebra
$\mathcal{T}_{\L}^\nabla$, this is now a proper and consistent family of probability kernels, i.e.
\begin{equation}
\gamma'_\L(A \mid \zeta) = \mathbf{1}_A(\zeta)
\end{equation}
for every $A \in \mathcal{T}_\L^\nabla$ and $\gamma'_\Delta \gamma'_\L = \gamma'_\Delta$ for any finite volumes $\L, \Delta \subset V$ with $\L \subset \Delta$. The proof is similar to the situation of regular (local) Gibbs specifications \cite[Proposition 2.5]{Ge}.

Let $\mathcal{C}_b(\Omega^\nabla)$ be the set of bounded functions on $\Omega^\nabla$.
Gradient Gibbs measures will now be defined in the usual way by having their conditional probabilities outside finite regions prescribed by the gradient Gibbs specification:

\begin{defn} \label{GGM} A measure $\nu \in \mathcal{M}_1(\O^\nabla)$ is called a {\em gradient Gibbs measure (GGM)} if it satisfies the DLR equation
	\begin{equation}\label{DLR}
	\int \nu (d\zeta)F(\zeta)=\int \nu (d\zeta) \int \gamma'_{\L}(d\tilde\zeta \mid \zeta) F(\tilde\zeta)
	\end{equation}
for every finite $\Lambda \subset V$ and for all $F \in \mathcal{C}_b(\Omega^\nabla)$. The set of gradient Gibbs measures will be denoted by $\mathcal{G}^\nabla(\gamma)$. 
\end{defn}
\textit{Construction of GGMs via boundary laws:}

In what follows we may assume the a-priori measure $\nu$ on $\mathbb{Z}$ to be the counting measure.
On trees with nearest-neighbours potentials $\Phi$ such as the one we consider here, it is possible to use the natural orientations of edges to introduce tree-indexed Markov chains. These are probability measures $\mu$ having the property that for $\mu$-a.e. oriented edges $\langle xy \rangle$ and any $\omega_y \in E$,
$$
\mu(\sigma_y = \omega_y | \mathcal{F}_{(-\infty, xy)}) = \mu(\sigma_y = \omega_y | \mathcal{F}_x),
$$
where $$(-\infty, xy) := \{ w \in V \mid \langle x,y \rangle \in \vec L_w \},$$ denotes the past of the edge $\langle x,y \rangle$.
One can associate to $\mu$ a  transition matrix defined to be any stochastic matrix $P=(P_{xy})_{\langle xy \rangle}$ satisfying for all $\omega_y \in E$
$$
\mu(\sigma_y= \omega_y | \mathcal{F}_x ) = P_{xy}( \sigma_x,\omega_y) \; \; \mu-{\rm a.s.}
$$

For $n.n.$ interaction potential $\Phi=(\Phi_b)_b$, where bonds are denoted $b=\langle x,y \rangle$, one first defines symmetric transfer matrices $Q_b$ following the terminology of Cox \cite{Cox} or Zachary \cite{Z1,Z2} (see also \cite{Ge}). Setting
$$
Q_b(\omega_b) = e^{- \big(\Phi_b(\omega_b) + | \partial x|^{-1} \Phi_{\{x\}}(\omega_x) + |\partial y |^{-1} \Phi_{\{y\}} (\omega_y) \big)}
$$
one can rewrite the Gibbsian specification as
$$
\gamma_\Lambda^\Phi(\sigma_\Lambda = \omega_\Lambda | \omega) = (Z_\Lambda^\Phi)(\omega)^{-1} \prod_{b \cap \Lambda \neq \emptyset} Q_b(\omega_b).
$$
If for any bond $b=\langle x,y \rangle$ the transfer operator $Q_b(\omega_b)$ is a function of gradient spin variable $\zeta_b=\omega_y-\omega_x$ we call the underlying potential $\Phi$ a \textit{gradient interaction potential}. \\
Now we note the following:
On the one hand, each extreme Gibbs measure on a tree with respect to a Markov specification is a tree-indexed Markov chain (Theorem 12.6 in \cite{Ge}). On the other hand (Lemma 3.1 in\cite{Z1}), a measure $\mu$ is a Gibbs measure with respect to a nearest neighbour potential $\Phi$ with associated family of transfer matrices $(Q_b)_{b \in L}$ iff its marginals at any finite volume $ \Lambda \subset V$ are of the form
\begin{equation} \label{FormOfGM}
\mu(\sigma_{\Lambda \cup \partial \Lambda}=\omega_{\Lambda \cup \partial \Lambda})=c_\Lambda(\omega_{\partial \Lambda}) \prod_{b \cap \Lambda \neq \emptyset} Q_b(\omega_b)
\end{equation}
for some function $c_\Lambda: \partial \Lambda \rightarrow \mathbb{R}_+$.
Taking this into account leads to the concept of boundary laws that allows to describe the Gibbs measures that are Markov chains on trees.

\begin{defn}\label{def:bl}
	A family of vectors $\{ l_{xy} \}_{\langle x,y \rangle \in \vec E}$ with $l_{xy} \in (0, \infty)^\Z$ is called a {\em boundary law for the transfer operators $\{ Q_b\}_{b \in L}$} if for each $\langle x,y \rangle \in \vec L$ there exists a constant  $c_{xy}>0$ such that the consistency equation
	\begin{equation}\label{eq:bl}
	l_{xy}(\omega_x) = c_{xy} \prod_{z \in \partial x \setminus \{y \}} \sum_{\omega_z \in \Z} Q_{zx}(\omega_x,\omega_z) l_{zx}(\omega_z)
	\end{equation}
	holds for every $\omega_x \in \Z$. A boundary law is called to be {\em $q$-periodic} if $l_{xy} (\omega_x + q) = l_{xy}(\omega_x)$ for every oriented edge $\langle x,y \rangle \in \vec L$ and each $\omega_x \in \Z$.
	
\end{defn}

In our unbounded discrete context, there is as in the finite-state space context, a one-to-one correspondence between boundary laws and tree-indexed Markov chains, but for some boundary laws only, the ones that are {\em normalisable} in the sense of Zachary \cite{Z1,Z2}.

\begin{defn}[Normalisable boundary laws] \label{nBL}
A boundary law $l$ is said to be {\em normalisable} if and only if
\begin{equation}\label{Norm}
\sum_{\omega_x \in \Z} \Big( \prod_{z \in \partial x} \sum_{\omega_z \in \Z} Q_{zx}(\omega_x,\omega_z) l_{zx}(\omega_z) \Big) < \infty
\end{equation} at any $x \in V$.
\end{defn}

The correspondence now reads the following:
\begin{thm}[Theorem 3.2 in \cite{Z1}] \label{BLMC}
For any Markov specification $\gamma$ with associated family of transfer matrices $(Q_b)_{b \in L}$  we have
\begin{enumerate}
\item Each {\bf normalisable} boundary law $(l_{xy})_{x,y}$ for $(Q_b)_{b \in L}$ defines a unique tree-indexed Markov chain $\mu \in \mathcal{G}(\gamma)$ via the equation given for any connected set $\Lambda \in \mathcal{S}$
\begin{equation}\label{BoundMC}
\mu(\sigma_{\Lambda \cup \partial \Lambda}=\omega_{\Lambda \cup \partial \Lambda}) = (Z_\Lambda)^{-1} \prod_{y \in \partial \Lambda} l_{y y_\Lambda}(\omega_y) \prod_{b \cap \Lambda \neq \emptyset} Q_b(\omega_b),
\end{equation}
where for any $y \in \partial \Lambda$, $y_\Lambda$ denotes the unique $n.n.$ of $y$ in $\Lambda$.
\item Conversely, every tree-indexed Markov chains $\mu \in \mathcal{G}(\gamma)$ admits a representation of the form (\ref{BoundMC}) in terms of a {\bf normalisable} boundary law (unique up to a constant positive factor).
\end{enumerate}
\end{thm}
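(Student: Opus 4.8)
The plan is to prove the two implications separately, using as the central bridge the marginal characterization \eqref{FormOfGM} (Lemma 3.1 in \cite{Z1}): a measure is a tree-indexed Gibbs-Markov chain for the family $(Q_b)_{b \in L}$ if and only if its finite-volume marginals factorize as $c_\Lambda(\omega_{\partial\Lambda}) \prod_{b \cap \Lambda \neq \emptyset} Q_b(\omega_b)$ for some positive function $c_\Lambda$. With this in hand, the whole argument reduces to understanding which functions $c_\Lambda$ are admissible and how exactly they encode a boundary law.

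For direction (1), starting from a normalisable boundary law $(l_{xy})$, I would first declare the right-hand side of \eqref{BoundMC} to be the candidate finite-volume marginal on $\Lambda \cup \partial\Lambda$. The normalisability condition \eqref{Norm} is precisely what guarantees that the normalizing sum $Z_\Lambda$ is finite and strictly positive, so each candidate is a genuine probability measure. The crux is then to verify Kolmogorov consistency: enlarging $\Lambda$ to a volume $\Lambda'$ by adjoining the shell $\partial\Lambda$ and summing out the newly outermost spins must reproduce the marginal on the smaller set. Here each freshly outer vertex has a unique neighbour pointing inward by the tree structure, and summing the products $Q_{zx}\, l_{zx}$ over the discarded outer spins and collapsing the product is exactly the content of the boundary-law equation \eqref{eq:bl}, with the constants $c_{xy}$ absorbed into the normalization. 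Once consistency is established, the Kolmogorov extension theorem produces $\mu$; that $\mu \in \mathcal{G}(\gamma)$ and is a tree-indexed Markov chain then follows immediately because its marginals have the form \eqref{FormOfGM}.

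For direction (2), given a tree-indexed Markov chain $\mu \in \mathcal{G}(\gamma)$, I would invoke \eqref{FormOfGM} to write its marginals as $c_\Lambda(\omega_{\partial\Lambda}) \prod_{b \cap \Lambda \neq \emptyset} Q_b(\omega_b)$. The Markov property makes the conditional law outside $\Lambda$ factorize over the subtrees hanging off each boundary vertex, which forces $c_\Lambda$ to split as a product $\prod_{y \in \partial\Lambda} l_{y y_\Lambda}(\omega_y)$ over the boundary; this reads off the candidate boundary law. Comparing the representations of $\mu$ on two nested volumes, and using the uniqueness clause of \eqref{FormOfGM} (which pins down $c_\Lambda$ up to a single positive vertex constant), then forces the candidate family to satisfy \eqref{eq:bl}. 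Normalisability \eqref{Norm} is automatic, since $\mu$ is a probability measure and hence the total masses $Z_\Lambda$ are finite. Finally, uniqueness of the boundary law up to a global positive factor reflects exactly the residual freedom in the vertex constants $c_{xy}$.

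I expect the main obstacle to be the consistency verification in direction (1): one must organize the sum over the discarded outer shell so that the telescoping through \eqref{eq:bl} is transparent, carefully tracking which transfer matrices $Q_b$ survive the summation and which boundary-law factors $l_{zx}$ are consumed at each vertex. In direction (2) the analogous delicate point is showing that the extracted family genuinely solves \eqref{eq:bl}, rather than merely furnishing some representation of the marginals, and this rests squarely on the uniqueness part of the marginal characterization.
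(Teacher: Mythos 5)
The paper does not prove this statement: Theorem~\ref{BLMC} is imported verbatim as Theorem~3.2 of Zachary \cite{Z1} (with Lemma~3.1 of \cite{Z1} quoted just above as the marginal characterization \eqref{FormOfGM}), so there is no in-paper argument to compare yours against. Your outline is, in substance, the standard proof as given in \cite{Z1}: define candidate marginals by \eqref{BoundMC}, use the boundary-law consistency \eqref{eq:bl} to telescope the sum over the discarded outer shell and verify Kolmogorov consistency, invoke \eqref{FormOfGM} for membership in $\mathcal{G}(\gamma)$; conversely, use the Markovian factorization over the subtrees hanging off $\partial\Lambda$ to split $c_\Lambda$ into single-site factors and read off the boundary law, with normalisability coming for free from $Z_{\{x\}}<\infty$. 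Two small points worth making explicit if you write this out: finiteness of $Z_\Lambda$ for general $\Lambda$ is not a separate consequence of \eqref{Norm} but is obtained by the same outside-in summation that proves consistency, reducing to the single-site sum in \eqref{Norm}; and in direction (2) the factorization of $c_\Lambda$ over boundary vertices requires the conditional independence of the branches given $\sigma_{\partial\Lambda}$, which is where the Markov-chain hypothesis (as opposed to mere membership in $\mathcal{G}(\gamma)$) actually enters. Neither is a gap in the idea, only in the bookkeeping.
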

\begin{rk}
The Markov chain $\mu$ defined in \eqref{BoundMC} has the transition probabilities
\begin{equation}\label{last}
P_{ij}(\omega_i,x)=\mu(\s_j = x \mid \s_i = \omega_{i})
= \frac{l_{ji}(x) Q_{ji}(x, \omega_i)}{\sum_y l_{ji}(y) Q_{ji}(y, \omega_i)}.
\end{equation}
The expressions \eqref{last}  may exist even in situations where the underlying boundary law $(l_{xy})_{x,y}$ is not normalisable in the sense of Definition \ref{nBL}. However, the Markov chain given by the so defined transition probabilities is in general not positively recurrent which means that it does not possess an invariant probability measure. More precisely if the Markov chain defined by \eqref{last} is of the form \eqref{BoundMC} (and hence of the form \eqref{FormOfGM}) then its underlying boundary law must be necessarily normalisable as one can see by considering \eqref{BoundMC} for $\Lambda=\{x\}, \, x \in V$.
Thus, there is no obvious extension of Theorem $\ref{BLMC}$ to non-normalisable boundary laws.

\end{rk}	
Let us now assume that $Q_b = Q$ for all $b\in L$ (this holds obviously true for the SOS model). We call a vector $l \in (0,\infty)^\Z$ a (spatially homogeneous) boundary law if there exists a constant $c>0$ such that
the consistency equation
\begin{equation}\label{bl12}
l(i) = c \left(\sum_{j \in \Z} Q(i,j) l(j) \right)^k
\end{equation}
is satisfied for every $i \in \Z$. \\
Now assume that the elements of the family $(Q_b)_{b \in L}$ do not depend on the bonds i.e. $Q_b = Q$ for all $b \in L$, i.e. the underlying potential is tree-automorphism invariant.

In the case of spatially homogeneous boundary laws the expression \eqref{Norm} in the definition of normalisability reads
\begin{equation*}
\begin{split}
\sum_{i \in \Z} \bigl(\, \sum_{j \in \Z} Q(i,j) l(j) \, \bigr)^{k+1}&=\sum_{i \in \Z}c^{-\frac{k+1}{k}} {\bigl(\, c(\sum_{j \in \Z} Q(i,j) l(j) )^k \, \bigr)^{\frac{k+1}{k}}}  \cr
&= c^{-\frac{k+1}{k}}\sum_{i \in \Z}(l(i))^\frac{k+1}{k},
\end{split}
\end{equation*}
which means that any spatially homogeneous normalisable boundary law is an element of the space $l^{1+\frac{1}{k}}$. Thus periodic spatially homogeneous boundary laws are never normalisable in the sense of Definition \ref{nBL}.\\
However, it is possible to assign (tree-automorphism invariant) Gradient Gibbs measures to spatially homogeneous $q$-periodic boundary laws to tree-automorphism invariant gradient interaction potentials.
The main idea consists in considering for any boundary law $(l_{xy})$ to a gradient interaction potential and any finite connected subset $\Lambda \subset V$ the (in general only $\sigma$-finite) measure $\mu_\Lambda$ on $(\mathbb{Z}^{\Lambda \cup \partial \Lambda}, \otimes_{i \in \Lambda \cup \partial \Lambda} 2^\mathbb{Z} )$ given by the assignment \eqref{BoundMC}, i.e.
\begin{equation}
\mu(\sigma_{\Lambda \cup \partial \Lambda}=\omega_{\Lambda \cup \partial \Lambda}) =  \prod_{y \in \partial \Lambda} l_{y y_\Lambda}(\omega_y) \prod_{b \cap \Lambda \neq \emptyset} Q_b(\omega_b).
\end{equation}
Then fix  any pinning site $w \in \Lambda$ and identify $\mu_\Lambda$ with its pushforward measure on $\mathbb{Z}^{\vec{L}} \times \mathbb{Z}$ under \eqref{PushF}. This measure has the marginals
\begin{equation}
\begin{split}
\mu_{\Lambda \cup \partial \Lambda}(\sigma_w=i \, , \, \eta_{\Lambda \cup \partial \Lambda}=\zeta_{\Lambda \cup \partial \Lambda })& =\mu_{\Lambda \cup \partial \Lambda}(\sigma_w=i)\mu_{\Lambda \cup \partial \Lambda}( \eta_{\Lambda \cup \partial \Lambda}=\zeta_{\Lambda \cup \partial \Lambda } \mid \sigma_w=i)  \cr &=\mu_{\Lambda \cup \partial \Lambda}(\sigma_w=i) \prod_{y \in \partial \Lambda}{l_{yy_\Lambda}}(i+\sum_{b \in \Gamma(w,y)}\zeta_b)\prod_{b \cap \Lambda \neq \emptyset}Q_b(\zeta_b).
\end{split}
\end{equation}
If the boundary law $l$ is assumed to be $q$-periodic, then $\mu_{\Lambda \cup \partial \Lambda}( \eta_{\Lambda \cup \partial \Lambda}= \cdot \mid \sigma_w=i)$ will depend on $i$ only modulo $q$. For any class label $s \in \mathbb{Z}_q$ this allows us to obtain a probability measure $\nu_{w,s}$ on $\mathbb{Z}^{\{b \in \vec L \mid b \subset \Lambda\}}$ by setting
\begin{equation}
\begin{split}
\nu_{w,s}(\eta_{\Lambda \cup \partial \Lambda}=\zeta_{\Lambda \cup \partial \Lambda})&:=Z^\Lambda_{w,s}\mu_{\Lambda \cup \partial \Lambda}( \eta_{\Lambda \cup \partial \Lambda}= \zeta_{\Lambda \cup \partial \Lambda} \mid \sigma_w=s) \cr
&=Z^\Lambda_{w,s}\prod_{y \in \partial \Lambda} l_{yy_\L}\Bigl (T_q(
s+\sum_{b\in \Gamma(w,y)}\zeta_b)
\Bigr) \prod_{b \cap \Lambda \neq \emptyset}Q_b(\zeta_b),
\end{split}
\end{equation}
where $Z^\Lambda_{w,s}$ is a normalization constant and $T_q: \mathbb{Z} \rightarrow \mathbb{Z}_q$ denotes the coset projection.
Then one can show the following:
\begin{thm}[Theorem 3.1 in \cite{KS}] \label{Mar}
	Let $(l_{<xy>})_{<x,y> \in \vec L}$ be any $q$-periodic boundary law to some gradient interaction potential. Fix any site $w \in V$ and any class label $s \in \mathbb{Z}_q$. Then the definition
	\begin{equation}
	\nu_{w,s}(\eta_{\Lambda \cup \partial \Lambda}=\zeta_{\L\cup\partial\L})
	=Z^\Lambda_{w,s} \prod_{y \in \partial \Lambda} l_{yy_\L}\Bigl (T_q(
	s+\sum_{b\in \Gamma(w,y)}\zeta_b)
	\Bigr) \prod_{b \cap \Lambda \neq \emptyset}
	Q_b(\zeta_b),
	\end{equation}
	where $\Lambda$ with $w \in  \L \subset V$ is any finite connected set, $\zeta_{\L\cup\partial\L} \in \Z^{\{b \in \vec L \mid b \subset (\L \cup \partial\L)\}}$ and $\Z^\Lambda_{w,s}$ is a normalization constant, gives a consistent family of probability measures on the gradient space $\Omega^\nabla$.
	The measures $\nu_{w,s}$ will be called pinned gradient measures.
\end{thm}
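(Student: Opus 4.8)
The plan is to prove two assertions. First, that for each finite connected $\Lambda \ni w$ the displayed formula, after fixing $Z^\Lambda_{w,s}$, is a genuine probability measure on the finitely many gradient coordinates indexed by $\{b \in \vec L : b \subset \Lambda \cup \partial\Lambda\}$. Second, that the resulting family $\{\nu^\Lambda_{w,s}\}$ is projective under enlargement of $\Lambda$, so that the Kolmogorov extension theorem yields a single measure $\nu_{w,s}$ on $\Omega^\nabla$.

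First I would settle the finite-volume well-definedness. Since $\Lambda \cup \partial\Lambda$ is finite, only finitely many bonds enter the two products; the boundary factors $l_{yy_\Lambda}(T_q(\cdots))$ take only the finitely many strictly positive and finite values of the $q$-periodic vector $l$, while each transfer factor $Q_b(\zeta_b)=e^{-\beta|\zeta_b|}$ is summable over $\zeta_b \in \Z$. Hence the sum defining $Z^\Lambda_{w,s}$ converges to a strictly positive finite number and $\nu^\Lambda_{w,s}$ is a probability measure. This is precisely where the gradient formulation evades the failure of normalisability (Definition \ref{nBL}): one never sums over the absolute height but only over the gradient variables, and the non-summable direction is exactly the absolute-height direction quotiented out in $\Omega^\nabla=\Z^V/\Z$.

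For the consistency I would pass to the height picture through the isomorphism \eqref{PushF}; here connectedness of $\Lambda$ with $w\in\Lambda$ guarantees that for each $y\in\partial\Lambda$ the path $\Gamma(w,y)$ stays inside $\Lambda\cup\partial\Lambda$, so that the height of every boundary vertex is unambiguously determined by $\omega_w$ together with the internal gradients. Writing $\omega_w=i$ and using the path formula \eqref{assertion}, each boundary value becomes $l_{yy_\Lambda}(\omega_y)=l_{yy_\Lambda}(i+\sum_{b\in\Gamma(w,y)}\zeta_b)$, which by $q$-periodicity equals $l_{yy_\Lambda}(T_q(i+\sum_b\zeta_b))$ and therefore depends on $i$ only through its class $s=T_q(i)$. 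Thus $\nu^\Lambda_{w,s}$ is the normalized conditional gradient law of the unnormalised height weight $\prod_{y\in\partial\Lambda} l_{yy_\Lambda}(\omega_y)\prod_{b\cap\Lambda\neq\emptyset}Q_b(\omega_b)$ given $\omega_w\in T_q^{-1}(s)$. The projectivity of this height weight is exactly the content of the boundary-law equation \eqref{eq:bl}: by induction it suffices to peel off a single boundary vertex $x\in\partial\Lambda$ and sum the larger weight over the heights $\omega_z$ of its children $z\in\partial x\setminus\{x_\Lambda\}$; performing the change of variables $\zeta_{\langle x,z\rangle}=\omega_z-\omega_x$ and applying \eqref{eq:bl} reproduces the factor $l_{xx_\Lambda}(\omega_x)$ up to the constant $c_{xx_\Lambda}$, which is absorbed into the normalization. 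This is the same mechanism that underlies Zachary's correspondence, Theorem \ref{BLMC}, now invoked only at finite volume, where normalisability is irrelevant.

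The main obstacle will be the consistency bookkeeping in the gradient variables, namely checking that summing out the newly added outer gradients commutes with the coset projections $T_q$ appearing in the boundary arguments. Concretely, the argument of $l_{zx}$ in the larger volume is $T_q(s+\sum_{b\in\Gamma(w,x)}\zeta_b+\zeta_{\langle x,z\rangle})=T_q(\omega_x+\zeta_{\langle x,z\rangle})$, and because $Q_b$ depends only on $\zeta_{\langle x,z\rangle}$ while the inner partial sum $\sum_{b\in\Gamma(w,x)}\zeta_b$ is shared by all affected terms, the summation over $\zeta_{\langle x,z\rangle}\in\Z$ factors through $\Z_q$ and yields precisely $\sum_{\zeta}Q(\zeta)\,l_{zx}(T_q(\omega_x+\zeta))$, the very sum occurring in \eqref{eq:bl}. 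Once this commutation is verified the constants $c_{xx_\Lambda}$ and the volume-dependent normalizations cancel in the projective identity, and Kolmogorov extension produces the claimed consistent family on $\Omega^\nabla$.
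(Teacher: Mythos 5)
The paper does not actually prove Theorem \ref{Mar}; it is quoted verbatim from Theorem 3.1 of \cite{KS}, so there is no in-paper argument to compare against, but your proposal reproduces the standard proof from that reference and is correct: finite-volume normalisability follows from the $q$-periodicity (hence uniform positive upper and lower bounds) of $l$ together with summability of $\sum_{\zeta}Q_b(\zeta)$, and consistency reduces, after pinning $\omega_w=i$ and observing that the weight depends on $i$ only through $T_q(i)$, to peeling off one boundary vertex at a time and applying \eqref{eq:bl}, with Kolmogorov extension finishing the construction. The one imprecision is that you justify $\sum_{\zeta}Q_b(\zeta)<\infty$ via the SOS-specific formula $e^{-\beta\vert\zeta\vert}$, whereas the theorem is stated for a general gradient interaction potential; in that generality the summability instead follows from the finiteness of the sums appearing in the consistency equation \eqref{eq:bl} combined with the positive lower bound on the periodic vector $l$.
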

By construction, the pinned gradient measures $\nu_{w,s}$ on $\Omega^\nabla$ have a \textit{restricted gradient} (Gibbs) property in the sense that the DLR-equation \eqref{DLR} holds for any finite $\Lambda \subset V$ which does not contain the pinning site $w$ (for details see \cite{KS}).
If the $q$-periodic boundary law is now additionally spatially homogeneous and the underlying potential is tree-automorphism invariant then it is possible to obtain a tree-automorphism invariant probability measure $\nu$ on the the gradient space by mixing the pinned gradient measures over an appropriate distribution on $\mathbb{Z}_q$. In this case, the restricted gradient Gibbs property of each of the pinned gradient measures leads to the Gibbs property of the measure $\nu$.

A useful representation of the finite-volume marginals of the resulting GGM is given in the following theorem:
\begin{thm}[Theorem 4.1, Remark 4.2 in \cite{KS}] \label{thm: constrGGM}	
Let $l$ be any spatially homogeneous $q$-periodic boundary law to a tree-automorphism invariant gradient interaction potential on the Cayley tree. Let $\Lambda \subset V$ be any finite connected set and let $w\in \Lambda$ be any vertex. Then the measure $\nu $ with marginals given by
\begin{equation}
\nu (\eta_{\L\cup\partial \L} = \zeta_{\L\cup\partial\L}) = Z_\L \ \left(\sum_{s\in\Z_q}  \prod_{y \in \partial\L} l \big(s + \sum_{b \in \Gamma(w,y)} \zeta_{b}\big)  \right)\prod_{b \cap \L \neq \emptyset} Q(\zeta_b),
\end{equation}
where $Z_\L$ is a normalisation constant, defines a spatially homogeneous GGM on $\Omega^\nabla$.
\end{thm}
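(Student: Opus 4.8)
The plan is to realise $\nu$ as a finite mixture over the class labels $s\in\Z_q$ of the pinned gradient measures $\nu_{w,s}$ from Theorem \ref{Mar}, and then to exploit a single structural fact — that the unnormalised density is independent of the pinning site $w$ — in order to upgrade the merely \emph{restricted} gradient Gibbs property of the $\nu_{w,s}$ to the full DLR property and to spatial homogeneity. First I would establish consistency of the prescribed marginals. Since $\Z_q$ is finite, the sum over $s$ commutes with the summation over the gradient variables carried by the edges in $(\Delta\cup\partial\Delta)\setminus(\Lambda\cup\partial\Lambda)$ that is needed to pass from a larger finite connected set $\Delta$ down to $\Lambda\subset\Delta$. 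For each fixed $s$ this reduction is exactly the computation underlying Theorem \ref{Mar} and the classical boundary-law correspondence \eqref{BoundMC}: summing out the outermost gradient variables collapses $\prod_{y\in\partial\Delta} l(\cdots)$ to $\prod_{y\in\partial\Lambda} l(\cdots)$ through repeated use of the homogeneous boundary-law equation \eqref{bl12}. Because \eqref{bl12} holds for every height argument, the constants $c$ produced at each step are the same for every $s$ and every configuration, hence absorbable into the normalisation $Z_\Lambda$. Carrying the finite $s$-sum through this computation yields Kolmogorov consistency, so $\nu$ is a well-defined element of $\mathcal M_1(\Omega^\nabla)$.

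The key observation, which I would isolate next, is that the unnormalised density
\begin{equation*}
\sum_{s\in\Z_q}\prod_{y\in\partial\Lambda} l\Big(s+\sum_{b\in\Gamma(w,y)}\zeta_b\Big)\prod_{b\cap\Lambda\neq\emptyset}Q(\zeta_b)
\end{equation*}
does not depend on the choice of pinning site $w$. Indeed, replacing $w$ by another vertex $w'$ changes each argument $\sum_{b\in\Gamma(w,y)}\zeta_b=\omega_y-\omega_w$ by the single $y$-independent integer $\sum_{b\in\Gamma(w,w')}\zeta_b$; since $l$ is $q$-periodic and $s$ ranges over the entire cyclic group $\Z_q$, this common shift is reabsorbed by the substitution $s\mapsto s-\sum_{b\in\Gamma(w,w')}\zeta_b\pmod q$, leaving the sum unchanged. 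As the factors $Q(\zeta_b)$ and hence $Z_\Lambda$ are unaffected, $\nu$ is genuinely independent of $w$.

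With this in hand the gradient Gibbs property follows cleanly. For a finite $\Lambda$ \emph{not} containing $w$, the DLR identity \eqref{DLR} for $\nu$ reduces, term by term in $s$, to the restricted gradient Gibbs property of the pinned measures $\nu_{w,s}$ recorded after Theorem \ref{Mar}, and passes to the finite mixture. For an \emph{arbitrary} finite $\Lambda$ one re-expresses $\nu$ using a pinning site $w'$ chosen outside $\Lambda$ — possible since $\Lambda$ is finite and the tree infinite, and legitimate by the site-independence just established — so that the previous case applies. Hence \eqref{DLR} holds for every finite $\Lambda$ and $\nu\in\mathcal G^\nabla(\gamma)$. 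Spatial homogeneity comes from the same two ingredients: a tree automorphism $g$ transports the marginal on $\Lambda$ pinned at $w$ to the marginal on $g\Lambda$ pinned at $gw$, and since the potential is tree-automorphism invariant (so $Q$ is common to all edges) while the density is independent of the pinning site, the pushforward $\nu\circ g^{-1}$ agrees with $\nu$ on every finite connected set, giving $\nu\circ g^{-1}=\nu$.

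I expect the main obstacle to be the site-independence step together with the orientation bookkeeping it conceals: one must verify that the telescoping identity $\sum_{b\in\Gamma(w,y)}\zeta_b=\omega_y-\omega_w$ holds with the correct signs when the path $\Gamma(w,y)$ traverses some edges against their chosen orientation, and that this sign convention remains compatible with the way a tree automorphism acts on the oriented gradient variables in the homogeneity argument. Everything else is either a finite interchange of summations or a direct appeal to Theorems \ref{Mar} and \ref{BLMC}.
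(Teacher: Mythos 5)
Your proposal is correct and follows essentially the same route the paper takes: the theorem is imported from \cite{KS}, and the paper's own justification is precisely the sketch you flesh out, namely mixing the pinned gradient measures $\nu_{w,s}$ of Theorem \ref{Mar} over $s\in\Z_q$ and using their restricted gradient Gibbs property, with the pinning-site independence (absorbing the common shift $\sum_{b\in\Gamma(w,w')}\zeta_b$ into the $s$-sum by $q$-periodicity) supplying the upgrade to the full DLR equation and to tree-automorphism invariance. No substantive gap; the points you flag (orientation signs, uniformity of the boundary-law constants in $s$) are exactly the bookkeeping the cited reference carries out.
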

\begin{rk} \label{ident}
Setting $n^w_i(\zeta_{\Lambda \cup \partial \Lambda }):=\vert \,  \{y \in \partial \Lambda \mid \sum_{b \in \Gamma(w,y)} \zeta_b \equiv i \mod q \} \, \vert$ the marginals of the measure $\nu$ defined in Theorem \ref{thm: constrGGM} can be written in the form:
\begin{equation}
\begin{split}
\nu (\eta_{\L\cup\partial \L} = \zeta_{\L\cup\partial\L}) &= Z_\L(\sum_{j \in \mathbb{Z}_q}\prod_{i \in \mathbb{Z}_q}{l_i^{n^w_{i+j}(\zeta_{\Lambda \cup \partial \Lambda })}} ) \prod_{b \cap \Lambda \neq \emptyset}Q(\zeta_b) \cr
&=Z_\L(\sum_{j \in \mathbb{Z}_q}\prod_{i \in \mathbb{Z}_q}{l_{i+j}^{n^w_i(\zeta_{\Lambda \cup \partial \Lambda })}} ) \prod_{b \cap \Lambda \neq \emptyset}Q(\zeta_b).
\end{split}
\end{equation}
This representation directly shows that two periodic boundary laws will lead to the same GGM if one is obtained from the other by a cyclic permutation or multiplication with a positive constant. \\
To obtain sufficient criteria for two GGM $\nu^l$ and $\nu^{\tilde{l}}$ associated to two distinct periodic boundary laws $l$ and $\tilde{l}$ with $l_0=\tilde{l}_0=1$ being distinct we first observe that $\nu^l = \nu^{\tilde{l}}$ if and only if
\begin{equation}
\begin{split}
\frac{\sum_{j \in \mathbb{Z}_q}\prod_{i \in \mathbb{Z}_q}{l_i^{n^w_{i+j}(\zeta_{\Lambda \cup \partial \Lambda })}}}{\sum_{j \in \mathbb{Z}_q}\prod_{i \in \mathbb{Z}_q}{\tilde{l}_i^{n^w_{i+j}(\zeta_{\Lambda \cup \partial \Lambda })}}}&=\frac{Z_{\Lambda}^{\tilde{l}}}{Z_{\Lambda}^l}
\end{split}
\end{equation}
for all finite subtrees $\Lambda$ and $\zeta_{\Lambda \cup \partial \Lambda } \in \mathbb{Z}^{\{b \in L \mid b \subset  \Lambda \cap \partial \Lambda\}}$. \\
Thus  $\nu^l = \nu^{\tilde{l}}$ if and only if for any finite subtree $\Lambda$ there is a constant $c(\Lambda)>0$ such that
\begin{equation} \label{necessary}
\sum_{j \in \mathbb{Z}_q}\prod_{i \in \mathbb{Z}_q}{l_i^{n_{i+j}}}=c(\Lambda) \,\sum_{j \in \mathbb{Z}_q}\prod_{i \in \mathbb{Z}_q}{\tilde{l}_i^{n_{i+j}}}
\end{equation}
for all vectors $(n_0,n_2, \ldots, n_{q-1}) \in \mathbb{N}_0$ with $\sum_{i \in \mathbb{Z}_q}n_i = \vert \partial \Lambda \vert $.
If we take a single-bond volume $\Lambda=\{b\}$, where $b \in L$,
we obtain the marginal
\begin{equation} \label{1-bound: necessary}
\begin{split}
\nu(\eta_b=\zeta_b)=Z_b\sum_{s \in \mathbb{Z}_q}l(s)l(s+\zeta_b)Q(\zeta_b)=Z_b(\sum_{j \in \mathbb{Z}_q}\prod_{i \in \mathbb{Z}_q}{l_i^{n_{i+j}(\zeta_b)}} )Q_b(\zeta_b).
\end{split}
\end{equation}
From this we get that if $\nu^l = \nu^{\tilde{l}}$ then condition (\ref{necessary}) is fulfilled for all vectors  \\$(n_0,n_1, \ldots, n_{q-1}) \in \{0,1,2\}^q$ with $\sum_{i \in \mathbb{Z}_q}n_i = 2$.
%
\end{rk}
We will now conclude some statements on identifiability of GGM with respect to the class of boundary laws which we will describe in the following section.
\begin{lemma} \label{lem: 2-per}
	Let $l$ and $\tilde{l}$ be two $2$-periodic boundary laws with $l_0=\tilde{l}_0=1$. Denote $l_1=a_1$, $\tilde{l}_1=a_2$. \\Then
	\begin{equation}
	\nu^l = \nu^{\tilde{l}} \quad \text{if and only if} \quad a_1=a_2 \text{ or } a_1a_2=1
	\end{equation}
\end{lemma}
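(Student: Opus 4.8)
The plan is to read off equality of the two GGMs through the criterion recorded in Remark \ref{ident}, handling the two implications of the biconditional separately. Throughout I identify a $2$-periodic boundary law with the pair of its values, so $l = (1, a_1)$ and $\tilde{l} = (1, a_2)$, and I will only need the single-bond ($\vert\partial\Lambda\vert = 2$) data.

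For the direction $(\Leftarrow)$ I would argue directly from the invariances listed in Remark \ref{ident}. If $a_1 = a_2$ then $l = \tilde{l}$ and there is nothing to show. If instead $a_1 a_2 = 1$, so that $a_2 = a_1^{-1}$, I observe that $\tilde{l} = (1, a_1^{-1})$ is obtained from $l = (1, a_1)$ by the cyclic shift $(1, a_1) \mapsto (a_1, 1)$ followed by multiplication with the positive constant $a_1^{-1}$. Since Remark \ref{ident} states that a cyclic permutation and a positive rescaling of a periodic boundary law leave the associated GGM unchanged, this gives $\nu^l = \nu^{\tilde{l}}$ at once.

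For the harder direction $(\Rightarrow)$ I would invoke the single-bond necessary condition recorded at the end of Remark \ref{ident}: if $\nu^l = \nu^{\tilde{l}}$ then \eqref{necessary} holds for all vectors $(n_0, n_1) \in \{0,1,2\}^2$ with $n_0 + n_1 = 2$. Specialising \eqref{necessary} to $q = 2$ and using $l_0 = \tilde{l}_0 = 1$, the two sides attached to a vector $(n_0, n_1)$ collapse to $a_1^{n_0} + a_1^{n_1}$ and $a_2^{n_0} + a_2^{n_1}$. Evaluating at $(2,0)$ and $(1,1)$ then yields the relations $1 + a_1^2 = c\,(1 + a_2^2)$ and $2 a_1 = 2 c\, a_2$ for one common positive constant $c = c(\Lambda)$, while $(0,2)$ merely reproduces the first relation.

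It then remains to eliminate $c$: from the second relation $c = a_1/a_2$, and substituting into the first and clearing denominators gives, after an elementary factorisation, $(a_1 - a_2)(1 - a_1 a_2) = 0$, which is exactly $a_1 = a_2$ or $a_1 a_2 = 1$. I do not expect a serious obstacle here; the only conceptual point worth flagging is that the three single-bond vectors already force the full conclusion, so no larger finite volumes $\Lambda$ need be examined. This is precisely why the single-bond version of the necessary condition in Remark \ref{ident} is the natural tool, and the remaining work is the bookkeeping of exponents in \eqref{necessary} for $q = 2$ together with the above factorisation.
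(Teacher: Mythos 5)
Your proof is correct, but it takes a different route from the paper's in both directions. For $(\Leftarrow)$ the paper verifies the identity of marginal ratios directly for every admissible volume, reducing to $\frac{a_1^{n}+1}{a_2^{n}+1}=\frac{a_1^{n-m}+a_1^{m}}{a_2^{n-m}+a_2^{m}}$ and checking it under $a_2=a_1^{-1}$; you instead invoke the invariance of the GGM under cyclic permutation and positive rescaling of the boundary law, writing $(1,a_1^{-1})=a_1^{-1}\cdot(a_1,1)$. This is cleaner, is explicitly licensed by Remark \ref{ident}, and is in fact the route the paper itself points to in the remark following Lemma \ref{lem: 3-per}. For $(\Rightarrow)$ the paper works with the whole family of functions $f_{(n,m)}(x)=\frac{x^n+1}{x^{n-m}+x^m}$ over all volumes and uses their monotonicity on $(0,1)$ and $(1,\infty)$ together with the symmetry $f_{(n,m)}(x)=f_{(n,m)}(1/x)$; you use only the single-bond necessary condition from Remark \ref{ident}, obtaining $1+a_1^2=c(1+a_2^2)$ and $a_1=c\,a_2$ with a common constant $c$ (both identities come from the same volume, so this is legitimate, exactly as in the paper's proof of Lemma \ref{lem: 4-per}), and the factorisation $(a_2-a_1)(1-a_1a_2)=0$ finishes it. Your version is more economical and structurally parallel to the paper's $4$-periodic identifiability lemma; the paper's version additionally exhibits the full equivalence over all volumes, which is more information than the lemma requires. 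As a side remark, the paper's own last line contains the typo ``$a_1a_2=0$'' where ``$a_1a_2=1$'' is meant; your argument avoids that step entirely.
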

\begin{proof}
	Let us first prove that $\nu^l = \nu^{\tilde{l}}$ if $ a_1=a_2 \text{ or } a_1a_2=1$.
	Using the marginals representation given in Remark \ref{ident}
	we have that $\nu^l=\nu^{\tilde{l}}$ if and only if
	\begin{equation} \label{2-per}
	\frac{\sum_{j \in \{0,1\}}\prod_{i \in \{0,1\}}{l_i^{n^w_{i+j}(\zeta_{\Lambda \cup \partial \Lambda })}}}{\sum_{j \in \{0,1\}}\prod_{i \in \{0,1\}}{\tilde{l}_i^{n^w_{i+j}(\zeta_{\Lambda \cup \partial \Lambda })}}} =const(\Lambda \cup \partial \Lambda)
	\end{equation}
	for any finite subtree $\Lambda$ and $\zeta_{\Lambda \cup \partial \Lambda } \in \mathbb{Z}^{\{b \in L \mid b \subset  \Lambda \cup \partial \Lambda \}}$.
	Let $n=\vert  \partial \Lambda  \vert \geq 3$ then the vectors $(n_0,n_1)(\zeta_{\L\cup\partial\L})$ are of the form $(n-m,m)$ for some integer $0 \leq m \leq n$.
	Inserting this into (\ref{2-per}) we conclude that $\nu^l=\nu^{\tilde{l}}$ if and only if for all $n \geq 3$ which can be realized as the number of points in the boundary of a finite subtree and any $0 \leq m_1,m_2 \leq n$ we have:
	\begin{equation}
	\frac{a_1^{n-m_1}+a_1^{m_1}}{a_2^{n-m_1}+a_2^{m_1}}=\frac{a_1^{n-m_2}+a_1^{m_1}}{a_2^{n-m_2}+a_2^{m_2}}.
	\end{equation}
	We may further assume $m_1=0$ and write $m=m_2$. Then this equation reduces to
	\begin{equation} \label{2-per finEq}
	\frac{a_1^{n}+1}{a_2^{n}+1}=\frac{a_1^{n-m}+a_1^{m}}{a_2^{n-m}+a_2^{m}},
	\end{equation}
	which holds true if $a_2=(a_1)^{-1}$. \\
To prove the other direction we must show that $a_1=a_2$ and $a_1a_2=1$ are the only solutions to the system (\ref{2-per finEq}):\\
For any $x>0$ set $f_{(n,m)}(x):=\frac{x^n+1}{x^{n-m}+x^m}$. Then \eqref{2-per finEq} is equivalent to $f_{(n,m)}(a_1)=f_{(n,m)}(a_2)$. Consider any $0<m<n$. Clearly $f_{(n,m)}$ is continuous, strictly decreasing on $(0,1)$ and strictly increasing on $(1, \infty)$, which means that for any $x \in (0,1 \rbrack$ there is at most one $y \in \lbrack 1, \infty)$ with $f_{(n,m)}(x)=f_{(n,m)}(y)$.
Since $f_{(n,m)}(x)=f_{(n,m)}(\frac{1}{x})$ we have that $f_{(n,m)}(a_1)=f_{(n,m)}(a_2)$ if and only if $a_1=a_2$ or $a_1a_2=0$.

\end{proof}
\begin{lemma} \label{lem: 4-per}
Consider any $4$-periodic boundary law of the type
\begin{equation*}
l_i^{(a,b)}=\left\{ \begin{array}{lll}
1, \ \ \mbox{if} \ \ i \equiv 0 \ \ \mbox{or} \ \ 2 \mod 4 \\
a, \ \ \mbox{if} \ \ i \equiv 1 \mod 4 \\
b, \ \ \mbox{if} \ \ i \equiv 3 \mod 4 \\
\end{array}
\right.
\end{equation*}
and denote the associated GGM by $\nu^{(a,b)}$. Let $(a_1,b_1) ,(a_2,b_2)$ be two such boundary laws. If $\nu^{(a_1,b_1)}=\nu^{(a_2,b_2)}$ then necessarily
\begin{equation*}
\begin{split}
&a_1+b_1 = a_2+b_2 \quad \text{or } \cr 
&(a_i+b_i)(a_j+b_j) = 4.
\end{split}
\end{equation*}
\end{lemma}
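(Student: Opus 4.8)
The plan is to derive the stated necessary condition purely from the single-bond marginals, using the criterion recorded in Remark \ref{ident}: if $\nu^{(a_1,b_1)}=\nu^{(a_2,b_2)}$, then condition \eqref{necessary} must hold, and in particular it must hold for the single-bond volume $\Lambda=\{b\}$, i.e. for all count vectors $(n_0,n_1,n_2,n_3)\in\{0,1,2\}^4$ with $\sum_i n_i=2$. The idea is that these single-bond data already produce enough polynomial identities in $(a,b)$ to constrain $a+b$ up to the claimed two-fold ambiguity.

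First I would specialize to $q=4$ and $l=l^{(a,b)}=(1,a,1,b)$ and compute the cyclic sum $\sum_{s\in\Z_4} l(s)\,l(s+\zeta_b)$ appearing in the single-bond marginal. As the gradient residue $r:=\zeta_b \bmod 4$ runs through $\{0,1,2,3\}$, this sum takes only three distinct values,
\[
A(a,b):=a^2+b^2+2\ \ (r\equiv 0),\qquad B(a,b):=2(a+b)\ \ (r\equiv 1,3),\qquad C(a,b):=2(1+ab)\ \ (r\equiv 2).
\]
Since all three arise from the \emph{same} volume $\Lambda=\{b\}$ (only $\zeta_b$ varies) and the factor $Q_b(\zeta_b)$ cancels between $\nu^{(a_1,b_1)}$ and $\nu^{(a_2,b_2)}$, the hypothesis forces one common positive constant $c=Z^{(a_2,b_2)}_b/Z^{(a_1,b_1)}_b$ with $A(a_1,b_1)=c\,A(a_2,b_2)$, $B(a_1,b_1)=c\,B(a_2,b_2)$ and $C(a_1,b_1)=c\,C(a_2,b_2)$.

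Next I would pass to the symmetric functions $s_i:=a_i+b_i$ and $p_i:=a_ib_i$. The three identities become $s_1=c\,s_2$, $\,1+p_1=c(1+p_2)$, and $s_1^2-2p_1+2=c(s_2^2-2p_2+2)$ (using $A=s^2-2p+2$). Because $a_i,b_i>0$ we have $s_2>0$, so $c=s_1/s_2$ is well defined. Substituting $p_1=c+cp_2-1$ from the second identity into the third, the mixed term $2cp_2$ cancels on both sides and one is left with $s_1^2+4=c\,(s_2^2+4)$. Inserting $c=s_1/s_2$ and clearing the denominator gives the factorization
\[
(s_1-s_2)(s_1s_2-4)=0,
\]
so either $s_1=s_2$, i.e. $a_1+b_1=a_2+b_2$, or $s_1s_2=4$, i.e. $(a_1+b_1)(a_2+b_2)=4$, which is exactly the claim.

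I expect the difficulty here to be bookkeeping rather than conceptual. The two points needing care are: verifying that the four residues $r\in\{0,1,2,3\}$ genuinely collapse to exactly the three values $A,B,C$, so that the single-bond data is encoded in three equations sharing one constant $c$; and tracking the cancellation of $2cp_2$ that reduces the third equation to the clean relation $s_1^2+4=c(s_2^2+4)$. Once those are checked the factorization is immediate, and since only the necessary direction is asserted, no sufficiency argument or consistency check over larger volumes $\Lambda$ is required.
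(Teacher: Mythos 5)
Your proposal is correct and follows essentially the same route as the paper: both extract from the single-bond marginal the three identities $a_1^2+b_1^2+2=c(a_2^2+b_2^2+2)$, $a_1+b_1=c(a_2+b_2)$, $1+a_1b_1=c(1+a_2b_2)$ with a common constant $c$, combine the first and third to get $(a_1+b_1)^2+4=c\left((a_2+b_2)^2+4\right)$, and factor the result as $(s_1-s_2)(s_1s_2-4)=0$. Your direct computation of the cyclic sums $A,B,C$ over the residues of $\zeta_b$ and the use of symmetric functions $s_i,p_i$ are only notational variants of the paper's insertion of the count vectors $(2,0,0,0)$, $(1,1,0,0)$, $(1,0,1,0)$ into the single-bond criterion.
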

\begin{proof}
Consider the marginal on a set $\Lambda:=\{b\}$, where $b \in L$ is any edge. Inserting the vectors $(n_0,n_1,n_2,n_3)=(2,0,0,0), \, (1,1,0,0)$ and $(1,0,1,0)$ into (\ref{necessary}) we conclude that if $\nu^{(a_1,b_1)}=\nu^{(a_2,b_2)}$ then there is some constant $c>0$ with
\begin{enumerate}
	\item $a_1^2+b_1^2+2=c(a_2^2+b_2^2+2)$,
	\item $a_1+b_1=c(a_2+b_2) \quad \text{and}$ 
	\item$ 1+a_1b_1=c(1+a_2b_2)$.
\end{enumerate}
Adding twice the third equation to the first we obtain
\begin{equation*}
(a_1+b_1)^2+4=c((a_2+b_2)^2+4),
\end{equation*}
which in combination with $(2)$ gives
\begin{equation}
\frac{(a_1+b_1)^2+4}{(a_2+b_2)^2+4}=\frac{a_1+b_1}{a_2+b_2}.
\end{equation}
Setting $x:=a_1+b_1$ and $y:=a_2+b_2$ leads to the equation $\frac{x^2+4}{y^2+4}=\frac{x}{y}$ which is equivalent to
\begin{equation*}
(x-y)(xy-4)=0.
\end{equation*}
This completes the proof.
\end{proof}
\begin{lemma} \label{lem: 3-per}
	Consider the $k$-regular tree, $k \geq 2$,  and a $3$-periodic boundary law of the type
	\begin{equation*}
	l_i^{(c)}=\left\{ \begin{array}{lll}
	1, \ \ \mbox{if} \ \ i \equiv 0 \, \mod 3 \\
	c, \ \ \text{else.} \\
	\end{array}
	\right.
	\end{equation*}
	Denote the associated GGM by $\nu^{(c)}$.
	Then the following holds true:
	\begin{enumerate}[a)]
	 \item $\nu^{(c_1)}= \nu^{(c_2)}$ if and only if for any $n \in \mathbb{N}$ we have $f_{(m_0,m_1,m_2)}(c_1)=f_{(m_0,m_1,m_2)}(c_2)$ for all $(m_0,m_1,m_2) \in {\{0,1, \ldots, n(k-1)+2)\}}^3$ with $m_0+m_1+m_2 = 2(n(k-1)+2)$, where \label{3-per: general}
			\[ f_{(m_0,m_1,m_2)}(x):=\frac{x^{m_0}+x^{m_1}+x^{m_2}}{1+2x^{n(k-1)+2}}, \quad x>0.
			\]
		\item 	$\nu^{(c_1)}= \nu^{(c_2)}$ if and only if $c_1=c_2$.
		\item The GGMs associated to the nontrivial members of this family of solutions are all different from the GGMs associated to the solutions given by the family of boundary laws defined in Lemma \ref{lem: 4-per}.
	\end{enumerate}
\end{lemma}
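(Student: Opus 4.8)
The plan is to feed the specific boundary law $l^{(c)}$ into the identifiability criterion of Remark~\ref{ident} and to turn the combinatorial condition \eqref{necessary} into the analytic one stated in (a). First I would compute the period-$3$ weight attached to a residue-count vector $(n_0,n_1,n_2)$: since $l^{(c)}_0=1$ and $l^{(c)}_1=l^{(c)}_2=c$, one gets
\begin{equation*}
\sum_{j\in\mathbb{Z}_3}\prod_{i\in\mathbb{Z}_3}(l^{(c)}_i)^{n_{i+j}}=c^{\,n_1+n_2}+c^{\,n_0+n_2}+c^{\,n_0+n_1}=c^{m_0}+c^{m_1}+c^{m_2},
\end{equation*}
where $m_i:=N-n_i$ and $N:=n_0+n_1+n_2=|\partial\Lambda|$. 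The geometric input is the tree identity $|\partial\Lambda|=(k-1)|\Lambda|+2$ (count the $(k+1)|\Lambda|$ half-edges at vertices of $\Lambda$, subtract twice the $|\Lambda|-1$ internal edges, using that no outside vertex is adjacent to two vertices of $\Lambda$). Writing $n:=|\Lambda|$ this gives $N=n(k-1)+2$ and $\sum_i m_i=2N$. I would then observe that every vector $(n_0,n_1,n_2)$ with $\sum_i n_i=N$ is realizable: each $y\in\partial\Lambda$ is reached from the pinning site $w$ by a path whose last edge $\langle y_\Lambda,y\rangle$ lies on no other such path, so the free integer gradient on that edge sets the residue of $y$ independently of all other boundary vertices. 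Hence the realizable $(m_0,m_1,m_2)$ are exactly those in $\{0,\dots,N\}^3$ with $\sum_i m_i=2N$. Plugging the realizable normaliser $(m_0,m_1,m_2)=(0,N,N)$ (i.e.\ $(n_0,n_1,n_2)=(N,0,0)$) into \eqref{necessary} pins the constant to $c(\Lambda)=(1+2c_1^N)/(1+2c_2^N)$, and dividing \eqref{necessary} by this value rewrites it as $f_{(m_0,m_1,m_2)}(c_1)=f_{(m_0,m_1,m_2)}(c_2)$; ranging over all $n$ and all realizable vectors yields (a).

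For (b) the only nontrivial direction is that the family of identities in (a) forces $c_1=c_2$; the conceptual point is that, unlike the $2$- and $4$-periodic cases, $f$ carries no $c\mapsto 1/c$ symmetry (the reflected exponent vector $(N-m_0,N-m_1,N-m_2)$ sums to $N$, not $2N$), so no spurious product relation can occur. I would exploit that (a) holds for \emph{all} $n$, hence for arbitrarily large $N=n(k-1)+2$. Fixing $j\ge 1$ and taking the realizable vectors $(N,N-j,j)$, one has
\begin{equation*}
f_{(N,N-j,j)}(c)=\frac{c^{N}+c^{N-j}+c^{j}}{1+2c^{N}}\xrightarrow[n\to\infty]{}
\begin{cases}\tfrac12\bigl(1+c^{-j}\bigr),& c>1,\\ c^{\,j},& c<1,\\ 1,& c=1.\end{cases}
\end{equation*}
Matching these limits for $c_1$ and $c_2$ first rules out the mixed cases (for large $j$ the limits are bounded away from one another whenever one parameter exceeds $1$ while the other is at most $1$) and then, in each surviving regime, gives $c_1^{\mp j}=c_2^{\mp j}$ for all $j\ge1$, whence $c_1=c_2$. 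This asymptotic step, which genuinely uses the whole family indexed by $n$ rather than a single volume, is the main obstacle: for a fixed $N$ the functions $f_{(m_0,m_1,m_2)}$ need not be individually injective, so coincidences must be excluded by letting $n\to\infty$.

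For (c) I would compare the single-bond marginals \eqref{1-bound: necessary}. Dividing out the common, strictly positive factor $Q(\zeta_b)$, the marginal of $\nu^{(c)}$ is proportional to the map $\zeta\mapsto\sum_{s\in\mathbb{Z}_3}l^{(c)}(s)\,l^{(c)}(s+\zeta)$, which is $3$-periodic with value $1+2c^2$ on $\zeta\equiv0$ and $c^2+2c$ on $\zeta\equiv1,2\pmod 3$; the corresponding function for a $4$-periodic law of Lemma~\ref{lem: 4-per} is $4$-periodic. If $\nu^{(c)}$ equalled some $\nu^{(a,b)}$, these two functions would be proportional by one positive constant, so the $3$-periodic one would also be $4$-periodic; but a function $\mathbb{Z}\to\mathbb{R}$ with periods $3$ and $4$ has period $\gcd(3,4)=1$ and is therefore constant. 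Constancy forces $1+2c^2=c^2+2c$, i.e.\ $(c-1)^2=0$, i.e.\ $c=1$, contradicting nontriviality. Hence every nontrivial $\nu^{(c)}$ differs from every $\nu^{(a,b)}$, which is (c).
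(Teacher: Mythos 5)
Your proposal is correct, and part (b) takes a genuinely different route from the paper. Part (a) matches the paper's argument essentially step for step (same boundary count $|\partial\Lambda|=n(k-1)+2$, same change of variables $m_i=|\partial\Lambda|-n_i$, same normalising vector $(0,N,N)$), with the added value that you explicitly justify realizability of every count vector $(n_0,n_1,n_2)$, which the paper leaves implicit. The divergence is in (b): the paper first extracts from the single-bond marginal (vectors $(2,0,0)$ and $(1,1,0)$) a quadratic in $c_1$, divides out the factor $(c_1-c_2)$ to obtain the explicit relation $c_1(4c_2-1)-c_2-2=0$, so that a hypothetical $c_2\neq c_1$ is uniquely determined, and then derives a contradiction from the exponential rate $\lim_n \frac1n\log f_{(m_0(n),m_1(n),m_2(n))}$ along sequences $m_i(n)\sim a_i n$, which would force $\log c_2/\log c_1=a_0/(a_2-(k-1))$ simultaneously for a whole range of admissible $(a_0,a_1,a_2)$. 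You bypass the algebraic step entirely and instead compute the plain limit of $f_{(N,N-j,j)}$ for each fixed $j$, getting $\tfrac12(1+c^{-j})$, $c^{j}$ or $1$ according to the position of $c$ relative to $1$; matching these for all $j$ first excludes the mixed regimes and then yields $c_1=c_2$ outright. This is more elementary (no polynomial division, no logarithmic rate computation) and makes transparent why, unlike in the $2$- and $4$-periodic cases, no $c\mapsto 1/c$ degeneracy can arise. For (c) you arrive at the same equation $1+2c^2=2c+c^2$ as the paper, which obtains it by recasting both laws as $12$-periodic and comparing the single-bond weights at $\zeta\equiv 1$ and $\zeta\equiv 3$; your packaging via the observation that the two bond-correlation functions $\zeta\mapsto\sum_s l(s)l(s+\zeta)$ would be proportional, hence share periods $3$ and $4$, hence be constant, is a clean conceptual reformulation of the same computation.
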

\begin{proof}
	The structure of the proof is similar to the proof of Lemma \ref{lem: 4-per}:
\begin{enumerate}[a)]
	\item First note that for any subtree of the $k$-regular tree with $n$ vertices we have $n(k+1)-2(n-1)=n(k-1)+2$ points in the outer boundary which follows by induction on $n$ (see \cite{Ro08}). Thus $\nu^{(c_1)}= \nu^{(c_2)}$ if and only if for each $n \in \mathbb{N}$ the equation \eqref{necessary} holds true for all $(n_0,n_1,n_2) \in {\mathbb{N}_0}^3$ with $n_0+n_1+n_2=n(k-1)+2$. This is equivalent to the existence of some $\lambda>0$ depending only on $k$ and $n$ with  \begin{equation*}
	c_1^{(n_0+n_1)}+c_1^{(n_0+n_2)}+c_1^{(n_1+n_2)}= \lambda 	(c_2^{(n_0+n_1)}+c_2^{(n_0+n_2)}+c_2^{(n_1+n_2)}).
	\end{equation*} 
	for all such vectors $(n_0,n_1,n_2)$. \\
	Setting $\begin{pmatrix}
	m_0 \\ m_1 \\m_2
	\end{pmatrix} := \begin{pmatrix}
	1 &1&0 \\ 1&0&1\\ 0&1&1
	\end{pmatrix}
	\begin{pmatrix}
	n_0 \\n_1 \\n_2
	\end{pmatrix}$, i.e.
	$\begin{pmatrix}
n_0 \\n_1 \\n_2 
	\end{pmatrix} = \frac{1}{2}\begin{pmatrix}
1&1&-1 \\ 1&-1&1 \\ -1 &1&1

	\end{pmatrix}
	\begin{pmatrix}
	m_0 \\m_1 \\m_2
	\end{pmatrix}$ this is equivalent to 
	 \begin{equation*}
	c_1^{m_0}+c_1^{m_1}+c_1^{m_2}= \lambda 	(c_2^{m_0}+c_2^{m_1}+c_2^{m_2})
	\end{equation*}
	for all
$(m_0,m_1,m_2) \in {\{0,1, \ldots, n(k-1)+2\}}^3$ with $m_0+m_1+m_2 = 2(n(k-1)+2)$. 
	Hence we have $\nu^{(c_1)}= \nu^{(c_2)}$ if and only if 
	\begin{equation*}
	\frac{c_1^{m_0}+c_1^{m_1}+c_1^{m_2}}{c_2^{m_0}+c_2^{m_1}+c_2^{m_2}}=\frac{c_1^{\tilde{m}_0}+c_1^{\tilde{m}_1}+c_1^{\tilde{m}_2}}{c_2^{\tilde{m}_0}+c_2^{\tilde{m}_1}+c_2^{\tilde{m}_2}}
	\end{equation*}
	for all vectors $(m_0,m_1,m_2), (\tilde{m}_0,\tilde{m}_1,\tilde{m}_2)$.
	Fixing $(\tilde{m}_0,\tilde{m}_1,\tilde{m}_2)=(0,n(k-1)+2,n(k-1)+2)$ this is equivalent to 
	\begin{equation*}
	\frac{c_1^{m_0}+c_1^{m_1}+c_1^{m_2}}{c_2^{m_0}+c_2^{m_1}+c_2^{m_2}}=\frac{1+2c_1^{n(k-1)+2}}{1+2c_2^{n(k-1)+2}}
	\end{equation*}
	for all $(m_0,m_1,m_2) \in {\{0,1, \ldots, n(k-1)+2\}}^3$ with $m_0+m_1+m_2 = 2(n(k-1)+2)$
	which proves the first statement. \\
	\item Consider a single-bond marginal 
	$\Lambda=\{b\}$, $b \in L$ and insert the vectors $(n_0,n_1, n_{2})=(2,0,0)$ and $(1,1,0)$ in \eqref{1-bound: necessary}. If $\nu^{(c_1)}=\nu^{(c_2)}$ then there is a constant $\lambda>0$ with 
	\begin{enumerate} [i)]
		\item $1+2c_1^2=\lambda(1+2c_2^2)$  and 
		\item $c_1^2+2c_1=\lambda(c_2^2+2c_2)$.
	\end{enumerate}
From this we obtain 
the polynomial equation in $c_1$:
\begin{equation}
c_1^2(4c_2-1)-2c_1(1+2c_2^2)+c_2(c_2+2)=0.
\end{equation}
Dividing out the linear term $(c_1-c_2)$ we arrive at 
\begin{equation} \label{3-per: explicit}
c_1(4c_2-1)-c_2-2=0.
\end{equation}

In the second step we will show that the assumption $\nu^{(c_1)}=\nu^{(c_2)}$ and $c_1 \neq c_2$ leads to a contradiction.  This will be done by considering \ref{3-per: general} for $n \rightarrow \infty$. Take any real numbers $0<a_0<a_1<a_2 < k-1$ where $a_0+a_1+a_2=2(k-1)$.
Then there is a sequence $(m_0(n),m_1(n),m_2(n))_{n \in \mathbb{N}}$  such that for all $n \in \mathbb{N}$ we have $(m_0(n),m_1(n),m_2(n)) \in {\{0,1, \ldots, n(k-1)+2)\}}^3$ and $m_0(n)+m_1(n)+m_2(n) = 2(n(k-1)+2)$ with the property that $\frac{m_i(n)}{n} \stackrel{n \rightarrow \infty}{\rightarrow} a_i$, $i \in \{0,1,2\}$. 

If $\nu^{(c_1)}=\nu^{(c_2)}$ and $c_1 \neq c_2$ then by \eqref{3-per: explicit} we have $c_2=\frac{c_1+2}{4c_1-1}$, so we may assume $\frac{1}{4}<c_1<1<c_2$. From \ref{3-per: general} we obtain
\begin{equation*}
\lim_{n \rightarrow \infty}\frac{1}{n}\log f_{(m_0(n),m_1(n),m_2(n))}(c_1)=\lim_{n \rightarrow \infty}  \frac{1}{n}\log f_{(m_0(n),m_1(n),m_2(n))}(c_2) 
\end{equation*} 
Hence, taking into account the assumption $0<c_1<1<c_2$ this implies
\begin{equation} \label{3-per: limit}
a_0\log(c_1)=(a_2-(k-1))\log(c_2),
\end{equation}
where the limiting behaviour of the l.h.s can be seen by writing $c_1^{m_0(n)}+c_1^{m_1(n)}+c_1^{m_2(n)}=c_1^{m_0(n)}(1+c_1^{m_1(n)-m_0(n)}+c_1^{m_2(n)-m_0(n)})$ and then inserting $m_i(n)=a_in+\varepsilon_{i,n}$ where $\varepsilon_{i,n} \stackrel{n \rightarrow \infty}{\rightarrow} 0$. The r.h.s. follows similarly.

Now \eqref{3-per: limit} is equivalent to
\begin{equation} \label{3-per: ldp}
\frac{\log (c_2)}{\log (c_1)}=\frac{a_0}{a_2-(k-1)}.
\end{equation} 
As $c_2$ is uniquely given by \eqref{3-per: explicit} and \eqref{3-per: ldp} holds true for all choices of $(a_0,a_1,a_2)$ in the allowed range, 
the assumption $\nu^{(c_1)}=\nu^{(c_2)}$ and $c_1 \neq c_2$ leads to a contradiction.
\item Let  $l^{(a,b)}$ denote any $4$-periodic boundary law as defined in Lemma \ref{lem: 4-per} and let $l^{(c)}$ be any $3$-periodic boundary law as defined above. We will consider each of them as a $12$-periodic boundary law. Take $\Lambda=\{b\}$, $b \in L$ and insert the vectors $(n_0,n_1, \ldots, n_{11})=(1,1,0,0,\ldots, 0)$ and $(1,0,0,1,0,0, \ldots,0)$ into \eqref{1-bound: necessary}. If $\nu^{(a,b)}=\nu^{(c)}$ then there is a constant $\lambda>0$ with
\begin{enumerate}[i)]
	\item $6(a+b)=4\lambda(2c+c^2)$  and 
	\item $6(a+b)=4 \lambda(1+2c^2)$
\end{enumerate}
From this we get $c^2-2c+1=0$ which leads to $c=1$.
\end{enumerate}
\end{proof}
\begin{rk}
Lemma \ref{lem: 2-per} can also be concluded from Lemma \ref{lem: 4-per} and the fact that two periodic boundary laws lead to the same GGM
if one is obtained from the other by cyclic permutations or multiplication with a positive constant.
\end{rk}
\section{Translation-invariant solutions}
In this section we calculate periodic solutions to the boundary law equation for the SOS-model.
First let $\beta>0$ be any inverse temperature and set $ \theta:= \exp(-\beta)<1$. The transfer operator $Q$ then reads $Q(i-j)=\theta^{\vert i-j \vert }$ for any $i,j \in \mathbb{Z}$ and a spatially homogeneous boundary law, now denoted by $z$, is any positive function on $\mathbb{Z}$ solving the system \eqref{bl12}, whose values we will denote by $z_i$ instead of $z(i)$. Further notice that a boundary law is only unique up to multiplication with any positive prefactor. Hence we may choose this constant in a way such that we have $z_0=1$. At last set $\mathbb{Z}_0:= \mathbb{Z} \setminus \{0\}$.
Taking into account these prerequisites the boundary law equation \eqref{bl12} now reads
\begin{equation}\label{nu11}
z_i=\left({\theta^{|i|}+
\sum_{j\in \mathbb Z_0}\theta^{|i-j|}z_j
\over
1+\sum_{j\in \mathbb Z_0}\theta^{|j|}z_j}\right)^k, \ \ i\in\mathbb Z_0.
 \end{equation}
\subsection{A simplification of the system (\ref{nu11})}
Let $\mathbf z(\theta)=(z_i=z_i(\theta), i\in \mathbb Z_0)$ be a solution to (\ref{nu11}).   Denote
\begin{equation}\label{lr}
l_i\equiv l_i(\theta)=\sum_{j=-\infty}^{-1}\theta^{|i-j|}z_j, \ \
r_i\equiv r_i(\theta)=\sum_{j=1}^{\infty}\theta^{|i-j|}z_j, \ \ i\in\mathbb Z_0.
\end{equation}
It is clear that each $l_i$ and $r_i$ can be a finite positive number or $+\infty$.
\begin{lemma}\label{l1} For each $i\in \mathbb Z_0$ we have
\begin{itemize}
\item $l_i<+\infty$ if and only if $l_0<+\infty$;

\item $r_i<+\infty$ if and only if $r_0<+\infty$.
\end{itemize}
\end{lemma}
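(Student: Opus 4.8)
The plan is to observe that, for a fixed $i\in\mathbb Z_0$, the summand of $l_i$ indexed by $j$ differs from the corresponding summand of $l_0$ only through the exponent $|i-j|$ versus $|j|$, and that these two exponents differ by at most $|i|$ \emph{uniformly in} $j$. Since $0<\theta<1$ and all $z_j>0$, this will give a comparison of $l_i$ with $l_0$ up to two strictly positive finite constants, from which the equivalence $l_i<+\infty\Leftrightarrow l_0<+\infty$ is immediate. Here $l_0$ is understood as the value of the defining expression \eqref{lr} at $i=0$, i.e. $l_0=\sum_{j=-\infty}^{-1}\theta^{|j|}z_j$, and likewise for $r_0$.

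Concretely, I would first record the elementary two-sided estimate coming from the triangle inequality: for every $j$ one has
\[
|i|+|j|\ \ge\ |i-j|\ \ge\ |j|-|i|.
\]
Because $x\mapsto\theta^{x}$ is strictly decreasing for $\theta\in(0,1)$, applying it reverses these inequalities, and after factoring $\theta^{|i|\pm|j|}=\theta^{\pm|i|}\theta^{|j|}$ we obtain, for all $j$,
\[
\theta^{|i|}\,\theta^{|j|}\ \le\ \theta^{|i-j|}\ \le\ \theta^{-|i|}\,\theta^{|j|}.
\]
Multiplying by $z_j>0$ and summing over $j\le -1$ then yields
\[
\theta^{|i|}\,l_0\ \le\ l_i\ \le\ \theta^{-|i|}\,l_0,
\]
where the positivity of every $z_j$ is exactly what permits summing the termwise bounds without any cancellation.

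Since $\theta^{|i|}$ and $\theta^{-|i|}$ are strictly positive finite constants (depending only on $i$, not on $j$), the left-hand inequality forces $l_i=+\infty$ whenever $l_0=+\infty$, while the right-hand inequality forces $l_i<+\infty$ whenever $l_0<+\infty$; together they give the claimed equivalence. The statement for $r_i$ is obtained by the identical argument, applying the same pointwise bound $\theta^{|i|}\theta^{|j|}\le\theta^{|i-j|}\le\theta^{-|i|}\theta^{|j|}$ but now summing over $j\ge 1$, which produces $\theta^{|i|}r_0\le r_i\le\theta^{-|i|}r_0$ and hence $r_i<+\infty\Leftrightarrow r_0<+\infty$.

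I do not expect any real obstacle here: the only point requiring care is that the comparison constants $\theta^{\pm|i|}$ be \emph{uniform} in the summation index $j$, which is precisely what the triangle inequality delivers. The argument uses nothing about $\mathbf z(\theta)$ being an actual solution of \eqref{nu11}, only that $\theta\in(0,1)$ and $z_j>0$, so it applies to any positive sequence.
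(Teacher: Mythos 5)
Your argument is correct: the termwise bound $\theta^{|i|}\theta^{|j|}\le\theta^{|i-j|}\le\theta^{-|i|}\theta^{|j|}$, valid uniformly in $j$ by the triangle inequality, sums (legitimately, since all terms are positive, even in the divergent case) to $\theta^{|i|}l_0\le l_i\le\theta^{-|i|}l_0$, and the equivalence follows; the same works for $r_i$. The paper takes a slightly different, sharper route: instead of two-sided inequalities it derives exact identities, namely $l_i=\theta^{i}l_0$ for $i\ge 1$ and $l_i=\theta^{i}l_0+\sum_{j=i}^{-1}(\theta^{j-i}-\theta^{i-j})z_j$ for $i\le -1$ (and symmetrically for $r_i$), obtained by splitting the sum at $j=i$ and resolving the absolute values; the finiteness equivalence then follows because the correction term is a finite sum. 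The exact identities cost a small case distinction but record precisely how $l_i$ depends on $l_0$; your comparison argument is coarser but shorter, avoids the case split, and, as you note, visibly uses nothing beyond $\theta\in(0,1)$ and positivity of the $z_j$. Both proofs are complete and either would serve here.
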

\begin{proof}
The proof follows from the following equalities
\begin{equation}\label{li}
l_i=\left\{\begin{array}{ll}
\theta^i l_0+\sum_{j=i}^{-1}(\theta^{j-i}-\theta^{i-j})z_j, \ \ \mbox{if} \ \ i\leq -1\\[3mm]
\theta^i l_0,  \ \ \ \ \ \ \mbox{if} \ \ i\geq 1.
\end{array}
 \right.
\end{equation}
\begin{equation}\label{ri}
r_i=\left\{\begin{array}{ll}
\theta^{-i} r_0+\sum_{j=1}^{i}(\theta^{i-j}-\theta^{j-i})z_j, \ \ \mbox{if} \ \ i\geq 1\\[3mm]
\theta^{-i} r_0,  \ \ \ \ \ \ \mbox{if} \ \ i\leq -1.
\end{array}
 \right.
\end{equation}
\end{proof}


In what follows, we will always assume that $l_0<+\infty$ and $r_0<+\infty$.


Denoting $u_i=u_0\sqrt[k]{z_i}$ (for some $u_0>0$) from (\ref{nu11}) we get
$$
u_i=C\cdot (\dots+\theta^2u^k_{i-2}+\theta u^k_{i-1}+u_i^k+\theta u_{i+1}^k+\theta^2u^k_{i+2}+\dots), \ \ i\in \mathbb Z,$$
for some $C>0$.

This system can be written as

\begin{equation}\label{U}
u_i=C\left( \sum_{j=1}^{+\infty}\theta^ju_{i-j}^k+u_i^k+ \sum_{j=1}^{+\infty}\theta^ju_{i+j}^k\right), \ \ i\in \mathbb Z.
\end{equation}
\begin{pro} \label{pps: 1}
A vector $\mathbf u=(u_i,i\in \mathbb Z)$, with $u_0=1$,  is a solution to (\ref{U}) if and only if for $u_i \ \ (=\sqrt[k]{z_i})$ the following holds
\begin{equation}\label{V}
u_i^k={u_{i-1}+u_{i+1}-\tau u_i\over u_{-1}+u_{1}-\tau}, \ \ i\in \mathbb Z,
\end{equation}
where $\tau=\theta^{-1}+\theta=2\cosh(\beta)$.
\end{pro}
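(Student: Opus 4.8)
The plan is to show that the infinite system \eqref{U} is equivalent to the second-order recursion \eqref{V} by eliminating the constant $C$ and the infinite sums through a telescoping/difference argument. The key observation is that the right-hand side of \eqref{U} has the form of a bilateral convolution with the geometric kernel $\theta^{|j|}$, and such convolutions satisfy a simple three-term recurrence. Concretely, writing $S_i := \sum_{j=1}^\infty \theta^j u_{i-j}^k + u_i^k + \sum_{j=1}^\infty \theta^j u_{i+j}^k$ so that $u_i = C\,S_i$, I would compute the combination $S_{i-1} + S_{i+1} - \tau S_i$ with $\tau = \theta^{-1} + \theta$ and exploit the identity $\theta^{|j-1|} + \theta^{|j+1|} - \tau\,\theta^{|j|} = 0$ for all $j \neq 0$, together with the boundary contribution at $j = 0$.

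First I would carry out this difference computation carefully. For each index $j$ appearing in the kernel the coefficient $\theta^{|(i\pm 1)-m|}$ arises, and the linear combination with weights $(1,1,-\tau)$ annihilates every term except those where the shift crosses the diagonal $m = i$. The only surviving contributions come from the "center" terms $u_{i-1}^k$, $u_i^k$, $u_{i+1}^k$, because for the geometric kernel the recurrence $\theta^{|n|}$ satisfies a homogeneous relation away from $n=0$ but picks up a defect of size $(\theta^{-1}-\theta)$ at the origin. Collecting these surviving terms I expect to obtain
\begin{equation*}
S_{i-1} + S_{i+1} - \tau S_i = (\theta^{-1}-\theta)\big(u_{i-1}^k - \tau\, ? \big) + \cdots
\end{equation*}
and after simplification the clean relation $S_{i-1}+S_{i+1}-\tau S_i = (\theta-\theta^{-1})\,u_i^k$ up to an overall constant, so that dividing $u_{i-1}+u_{i+1}-\tau u_i = C\,(S_{i-1}+S_{i+1}-\tau S_i)$ yields $u_{i-1}+u_{i+1}-\tau u_i$ proportional to $u_i^k$. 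Normalizing at $i=0$ (using $u_0 = 1$) then fixes the proportionality constant as $u_{-1}+u_1-\tau$, which produces exactly \eqref{V}.

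For the converse direction, I would assume \eqref{V} holds and reconstruct \eqref{U}. The idea is that \eqref{V} is a linear three-term recurrence for the quantity $v_i := u_i$ driven by the source $u_i^k$, and its resolvent (Green's function) for the operator $v_i \mapsto v_{i-1}+v_{i+1}-\tau v_i$ is precisely the geometric kernel $\theta^{|i-j|}$ up to the normalizing factor, since $\theta$ and $\theta^{-1}$ are the two roots of the characteristic equation $x^2 - \tau x + 1 = 0$. Under the standing finiteness assumptions $l_0, r_0 < \infty$ from Lemma \ref{l1}, the decaying solution is selected and inverting the recurrence recovers $u_i = C\sum_j \theta^{|i-j|}u_j^k$, which is \eqref{U}.

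The main obstacle I anticipate is controlling the boundary/convergence terms: the sums in \eqref{U} may a priori be infinite, so the formal difference computation must be justified using the finiteness of $l_0$ and $r_0$ guaranteed by Lemma \ref{l1}, and the inversion of the difference operator in the converse direction requires picking the correct (summable) branch $\theta^{|i-j|}$ rather than the growing solution $\theta^{-|i-j|}$. Once summability is secured, the algebra of the kernel identity $\theta^{|j-1|}+\theta^{|j+1|} = \tau\,\theta^{|j|}$ away from the origin is routine, and the equivalence follows by matching the defect at the origin to reproduce the $u_i^k$ source term.
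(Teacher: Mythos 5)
Your necessity argument is sound and is essentially the paper's own computation in a different notation: the paper obtains $u_{i-1}+u_{i+1}-\tau u_i=C(\theta-\theta^{-1})u_i^k$ by explicitly re-indexing the four shifted sums, which is exactly your kernel identity $\theta^{|j-1|}+\theta^{|j+1|}-\tau\theta^{|j|}=0$ for $j\neq 0$ with defect $\theta-\theta^{-1}$ at $j=0$; normalizing by the $i=0$ equation (using $u_0=1$) then gives \eqref{V} in both treatments.

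The genuine gap is in your converse. Knowing that $u$ satisfies the inhomogeneous three-term recurrence $u_{i-1}+u_{i+1}-\tau u_i=c\,u_i^k$ and that $w_i:=C\sum_{j}\theta^{|i-j|}u_j^k$ satisfies the same recurrence does not let you conclude $u=w$: the difference $u_i-w_i$ solves the homogeneous equation and is therefore of the form $A\theta^{i}+B\theta^{-i}$. ``Picking the summable branch $\theta^{|i-j|}$ of the Green's function'' only fixes your particular solution $w$; it says nothing about this two-parameter homogeneous ambiguity in $u$, which is the actual obstruction. To kill $A$ and $B$ you need growth control coming from the standing assumption $l_0,r_0<\infty$ (for instance, $r_0<\infty$ forces $\theta^{i}u_i^k\to 0$, which excludes $B>0$; a tail estimate showing $\sum_j\theta^{|i-j|}u_j^k=o(\theta^{-i})$ as $i\to+\infty$ combined with positivity of $u$ excludes $B<0$; and symmetrically for $A$). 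None of this appears in your sketch, and without it the converse is not proved. The paper sidesteps the issue entirely: it never inverts the difference operator, but instead writes the recurrence at shifted indices, multiplies by $\theta^{j}$, sums over $j\geq 1$ separately to the left and to the right to obtain the two telescoped identities $\theta u_{i+1}-u_i=\tilde C(\theta-\theta^{-1})\sum_{j\geq1}\theta^{j}u_{i+1-j}^k$ and $\theta u_i-u_{i+1}=\tilde C(\theta-\theta^{-1})\sum_{j\geq1}\theta^{j}u_{i+j}^k$, and adds them to reconstruct \eqref{U} directly (the only analytic input being absolute convergence of the occurring sums). Either supply the argument excluding the homogeneous part, or replace the inversion step by this telescoping reconstruction.
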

\begin{proof} {\sl Necessity.} From (\ref{U}) we get
$$u_{i-1}+u_{i+1}=$$ $$C\left(\sum_{j=1}^\infty \theta^ju_{i-1-j}^k+
\sum_{j=1}^\infty \theta^{j}u_{i+1-j}^k+u^k_{i-1}+u^k_{i+1}+
\sum_{j=1}^\infty \theta^ju_{i-1+j}^k+
\sum_{j=1}^\infty \theta^{j}u_{i+1+j}^k\right)=$$
 $$C\left(\theta^{-1}\sum_{{m=1\atop m=j+1}}^\infty \theta^mu_{i-m}^k-u^k_{i-1}+
\theta\sum_{{j'=1\atop j'=j-1}}^\infty \theta^{j'}u_{i-j'}^k+\theta u^k_i+u^k_{i-1}+\right.$$ $$\left.u^k_{i+1}+\theta
\sum_{{n=1\atop n=j-1}}^\infty \theta^nu_{i+n}^k+\theta u^k_i+\theta^{-1}
\sum_{{\bar j=1\atop \bar j=j+1}}^\infty \theta^{\bar j}u_{i+\bar j}^k-u^k_{i+1}\right)=$$
$$C\left((\theta^{-1}+\theta)\sum_{j=1}^\infty \theta^ju_{i-j}^k+2
\theta u_{i}^k+(\theta^{-1}+\theta)\sum_{j=1}^\infty \theta^ju_{i+j}^k\right)=$$$$
(\theta^{-1}+\theta)u_i+C(\theta-\theta^{-1})u^k_i.$$
Thus
\begin{equation}
\label{u0}
u_{i-1}+u_{i+1}-(\theta^{-1}+\theta)u_i=C(\theta-\theta^{-1})u_i^k, \ \ i\in \mathbb Z.
\end{equation}
Since $u_0=1$ dividing both sides of (\ref{u0}) to the equality of the case $i=0$ we get
 (\ref{V}).

{\sl Sufficiency.} Assume (\ref{V}) holds. Then we get (\ref{u0}) with some $C=\tilde C$. Write this equality for $i$ replaced by $i+1-j$, i.e.
 \begin{equation}
 \label{u1}
 u_{i-j}+u_{i-j+2}-(\theta^{-1}+\theta)u_{i+1-j}=\tilde C(\theta-\theta^{-1})u_{i+1-j}^k, \ \ i,j\in \mathbb Z.
 \end{equation}
 Multiply both sides of (\ref{u1}) by $\theta^j$ and sum over $j=1,2,\dots$. Here, absolute convergence of all occurring infinite sums is guaranteed by the assumption $l_0<+\infty$ and $r_0<+\infty$ and the fact that $\theta<1$. Then after rearrangement/simplifications we get
 $$
  \label{u2}
  \theta u_{i+1}-u_i=\tilde C(\theta-\theta^{-1})\sum_{j=1}^\infty\theta^j u_{i+1-j}^k.
  $$
  Dividing both sides of this equality by $\theta$ we get
 \begin{equation}
 \label{u2}
 u_{i+1}-\theta^{-1}u_i=\tilde C(\theta-\theta^{-1})\sum_{j=1}^\infty\theta^{j-1} u_{i+1-j}^k=
\tilde C(\theta-\theta^{-1})\left(\sum_{{m=1\atop m=j-1}}^\infty\theta^{m} u_{i-m}^k+u_i^k\right).
 \end{equation}

 Now rewrite (\ref{u0}) for $i$ replaced by $i+j$, $C$ is replaced by $\tilde C$  and multiply both sides of the obtained equality by $\theta^j$ then sum over $j=1,2,\dots$. After simplifications we get
   \begin{equation}
   \label{u3}
   \theta u_{i}-u_{i+1}=\tilde C(\theta-\theta^{-1})\sum_{j=1}^\infty \theta^ju_{i+j}^k.
   \end{equation}
 Adding (\ref{u2}) and (\ref{u3}) we get the $i$th equation of (\ref{U}) with $C$ replaced by $\tilde C$. For $u_0=1$ we get $\tilde C=C$.

\end{proof}

\begin{lemma}\label{l2}
If $l_0<+\infty$ and $r_0<+\infty$ then we have
$$l_0={\theta-u_{-1}\over u_{-1}+u_{1}-\tau},\ \ \
r_0={\theta-u_{1}\over u_{-1}+u_{1}-\tau}.$$
\end{lemma}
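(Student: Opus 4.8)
The plan is to read off both identities directly from the auxiliary recursions already established inside the proof of Proposition \ref{pps: 1}, after recording one bookkeeping fact. Since $u_j=\sqrt[k]{z_j}$ with the normalization $u_0=1$, we have $z_j=u_j^k$, so by the definitions in \eqref{lr} the two quantities we must compute are precisely the tail sums
\[
l_0=\sum_{j=1}^{\infty}\theta^{j}u_{-j}^k,\qquad r_0=\sum_{j=1}^{\infty}\theta^{j}u_{j}^k .
\]
These are exactly the infinite sums that appear on the right-hand sides of the identities \eqref{u2} and \eqref{u3}. The one preliminary observation I would record is that, evaluating \eqref{u0} at $i=0$ and using $u_0=1$ together with $\theta^{-1}+\theta=\tau$, one gets $\tilde C(\theta-\theta^{-1})=u_{-1}+u_1-\tau$; this is the common factor that will turn the tail sums into the stated denominators.

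First I would obtain $r_0$. Setting $i=0$ in \eqref{u3} gives, on the left, $\theta u_0-u_1=\theta-u_1$, and on the right $\tilde C(\theta-\theta^{-1})\sum_{j\ge1}\theta^{j}u_{j}^k=(u_{-1}+u_1-\tau)\,r_0$. Solving yields $r_0=(\theta-u_1)/(u_{-1}+u_1-\tau)$, as claimed. For $l_0$ I would produce the left-pointing mirror of \eqref{u3}: rewrite \eqref{u0} with $i$ replaced by $i-j$, multiply by $\theta^{j}$, and sum over $j\ge1$. Reindexing the three shifted sums and using the cancellation $\theta^{-1}+\theta-\tau=0$ (the same collapse that drives the sufficiency argument of Proposition \ref{pps: 1}) leaves
\[
\theta u_i-u_{i-1}=\tilde C(\theta-\theta^{-1})\sum_{j=1}^{\infty}\theta^{j}u_{i-j}^k .
\]
Evaluating this at $i=0$ gives $\theta-u_{-1}=(u_{-1}+u_1-\tau)\,l_0$, hence $l_0=(\theta-u_{-1})/(u_{-1}+u_1-\tau)$.

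The main point requiring care is not the algebra, which is routine telescoping, but the justification of absolute convergence and of the interchange of the infinite summation with the linear recursion in deriving the mirror identity. This is exactly where the hypotheses $l_0<+\infty$ and $r_0<+\infty$ together with $\theta<1$ enter, precisely as flagged in the sufficiency part of Proposition \ref{pps: 1}; under these assumptions all the rearranged geometric-type sums converge absolutely, so the manipulations are legitimate. An alternative to rederiving the mirror identity would be to invoke the reflection symmetry $i\mapsto -i$ of the derivation leading to \eqref{u3}, but writing the shifted summation out explicitly is cleaner and keeps the convergence hypotheses visible.
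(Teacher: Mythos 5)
Your proof is correct and is essentially the paper's argument: both reduce to weighting the three-term recursion $u_{j-1}+u_{j+1}-\tau u_j=(u_{-1}+u_1-\tau)u_j^k$ by geometric factors, summing over the relevant half-line, and using the cancellation $\theta^{-1}+\theta-\tau=0$ so that only the boundary term $\theta-u_{\mp1}$ survives. The paper substitutes \eqref{V} directly into the definition of $l_0$ rather than routing through the summed identities \eqref{u2}--\eqref{u3} of Proposition \ref{pps: 1}, but the underlying computation is the same.
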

\begin{proof} Using (\ref{V}) we get
$$l_0=\sum_{j=-\infty}^{-1}\theta^{-j}z_j=
\sum_{j=-\infty}^{-1}\theta^{-j}u_j^k=
\sum_{j=-\infty}^{-1}\theta^{-j}{u_{j-1}+u_{j+1}-\tau u_j\over u_{-1}+u_{1}-\tau}.$$
Compute the following
$$\sum_{j=-\infty}^{-1}\theta^{-j}(u_{j-1}+u_{j+1}-\tau u_j)=$$
$$ \theta^{-1}\sum_{j=-\infty}^{-1}\theta^{-j+1}u_{j-1}+\theta \sum_{j=-\infty}^{-1}\theta^{-j-1}u_{j+1}-\tau\sum_{j=-\infty}^{-1}\theta^{-j}u_j=$$
$$ \theta-u_{-1}+(\theta^{-1}+\theta-\tau)\sum_{j=-\infty}^{-1}\theta^{-j}u_j.$$
Since $\theta^{-1}+\theta-\tau=0$ we get the formula of $l_0$. The case $r_0$ is similar.
\end{proof}

By this Lemma we have
\begin{equation}\label{1lr}
1+l_0+r_0={\theta-\theta^{-1}\over u_{-1}+u_1-\tau}.
\end{equation}

The equation (\ref{V}) can be separated into the following independent recurrent equations
\begin{equation}\label{L}
u_{-i-1}=(u_{-1}+u_1-\tau)u_{-i}^k+\tau u_{-i}-u_{-i+1}, \end{equation}
\begin{equation}\label{Re}
u_{i+1}=(u_{-1}+u_1-\tau)u_{i}^k+\tau u_{i}-
u_{i-1}, \end{equation}
where $i\geq 0$, $u_0=1$ and $u_{-1}$, $u_{1}$ are some initial numbers.
Note that for $i=0$ the above equations are trivially fulfilled for all values of $u_1$ and $u_{-1}$. Hence it suffices to consider \eqref{L} and \eqref{Re} for $i \geq 1$.


\subsection{A class of 4-Periodic solutions to (\ref{V})}


In this subsection we shall describe the two-parameter family of solutions to (\ref{V}) which have the form
\begin{equation}\label{up}
u_n=\left\{ \begin{array}{lll}
1, \ \ \mbox{if} \ \ n=0 \ \ \mbox{or} \ \ 2 \mod 4,\\[2mm]
a, \ \ \mbox{if} \ \ n=1 \mod 4\\[2mm]
b, \ \ \mbox{if} \ \ n=3 \mod 4,
\end{array}
\right.
\end{equation}
where $a$ and $b$ some positive numbers.
Such a solution defines a periodic two-side infinite sequence, i.e.
\begin{equation}
\label{pab}
..., a, 1, b, 1, a, 1, b, 1, a, 1, b,...
\end{equation}

The equations \eqref{L} and \eqref{Re} give the following system of equations
\begin{equation}
\label{ab}
\begin{array}{ll}
(a+b-\tau)b^k+\tau b-2=0\\[2mm]
(a+b-\tau)a^k+\tau a-2=0.
\end{array}
\end{equation}
For simplicity we consider the case $k=2$ and give full analysis of the system (\ref{ab}).\\

In case $k=2$ subtracting from the first equation of the system the second one we get
$$(b-a)[(a+b)^2-\tau(a+b)+\tau]=0.$$
Which gives three possibilities:
\begin{equation}
\label{b}
a=b,  \ \ \mbox{and} \ \  a={1\over 2}(\tau\pm \sqrt{\tau^2-4\tau})-b \ \ \mbox{for} \ \ \tau\geq 4.
\end{equation}

{\bf Case $a=b$}. In this case from the first equation of (\ref{ab}) we get
\begin{equation}
\label{a3}
2a^3-\tau a^2+\tau a-2=0.
\end{equation}
One easily gets the following solutions to this equation (recall that $\tau>2$):
\begin{itemize}
\item If $\tau \leq 6$ then the equation (\ref{a3}) has unique solution $a_0=1$.
\item If  $\tau>6$ then there are three solutions (see Fig. \ref{fn1})
$$a_0=1, \ \ a_1={1\over 4}(\tau-2-\sqrt{(\tau-2)^2-16}), \ \  a_2={1\over 4}(\tau-2+\sqrt{(\tau-2)^2-16}).$$
\end{itemize}
Note that these $2$-periodic solutions can be already found in \cite{KS} (recall that $\tau=2\cosh(\beta)$).\\
\begin{figure}
\includegraphics[width=9cm]{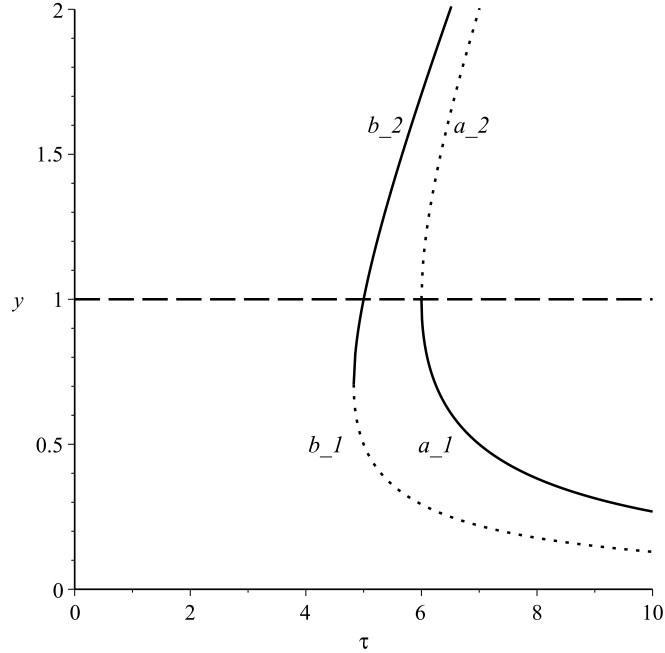}
\caption{ The graphs of the functions $a_1$, $a_2$ giving 2-periodic boundary laws. The graphs of the functions $b_1$, $b_2$ giving 3-periodic boundary laws. As $a_2=a_1^{-1}$, both 2-periodic boundary laws lead to the same GGM.\label{fn1}}
\end{figure}
{\bf Case  $a+b={1\over 2}(\tau+ \sqrt{\tau^2-4\tau})$}.
In this case from the second equation of (\ref{ab}) we get
$$(\tau-\sqrt{\tau^2-4\tau})a^2-2\tau a +4=0.$$
Which for $\tau\geq 4$ has the solutions
$$a_3={\tau-\sqrt{\tau^2-4\tau+4\sqrt{\tau^2-4\tau}}\over \tau-\sqrt{\tau^2-4\tau}}, \ \ a_4={\tau+\sqrt{\tau^2-4\tau+4\sqrt{\tau^2-4\tau}}\over \tau-\sqrt{\tau^2-4\tau}}. $$
Using (\ref{b}) we get $b_3=a_4$ and $b_4=a_3$.

{\bf Case  $a+b={1\over 2}(\tau- \sqrt{\tau^2-4\tau})$}.
In this case similarly as in previous case we obtain
\begin{equation*}
(\tau+\sqrt{\tau^2-4\tau})a^2-2\tau a+4=0
\end{equation*}
which
for $\tau\geq 2+2\sqrt{5}$ has the following solutions
$$a_5={\tau-\sqrt{\tau^2-4\tau-4\sqrt{\tau^2-4\tau}}\over \tau+\sqrt{\tau^2-4\tau}}, \ \ a_6={\tau+\sqrt{\tau^2-4\tau-4\sqrt{\tau^2-4\tau}}\over \tau+\sqrt{\tau^2-4\tau}}. $$
Using (\ref{b}) we get $b_5=a_6$ and $b_6=a_5$. Clearly all of these solutions are positive (see Fig. \ref{fn2}).
\begin{figure}
\includegraphics[width=8cm]{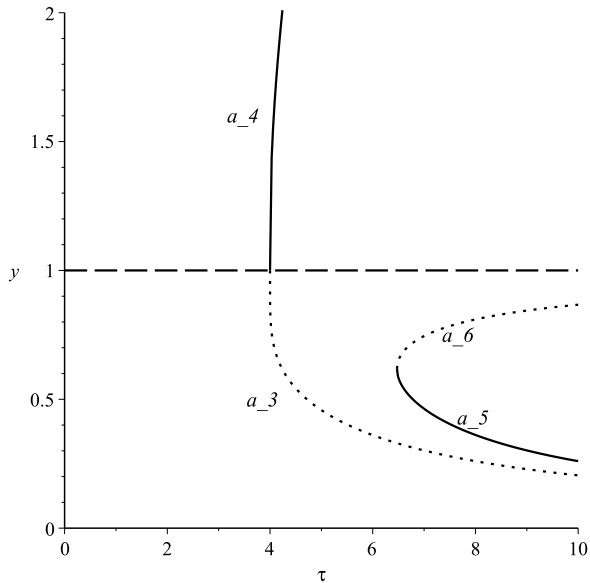}
\caption{ The graphs of the functions $(a_3,a_4)$ and $(a_5,a_6)$ 
	giving 4-periodic boundary laws.}\label{fn2}
\end{figure}

Taking into account the freedom of cyclic permutations of boundary laws we thus proved the following:

\begin{pro}\label{pps} The periodic solutions of the form (\ref{pab}) depend on the parameter $\tau=2\cosh(\beta)$ in the following way. 
\begin{itemize}
\item[1.] If $\tau \leq 4$ then there is a unique solution with $a=b=1$.
\item[2.] If $4<\tau \leq 6$  then there are exactly two solutions with $a=b=1$ and $a=a_3$, $b=b_3$.
\item[3.] If $6<\tau < 2+2\sqrt{5}$  then there are exactly four solutions with $a=b=1$, $a=b=a_1$, $a=b=a_2$ and $(a,b)=(a_3, b_3)$.
\item[4.] If $\tau \geq 2+2\sqrt{5}$  then there are exactly five solutions with $a=b=1$, $a=b=a_1$, $a=b=a_2$, $(a,b)=(a_3, b_3)$ and $(a,b)=(a_5, b_5)$,
\end{itemize}
where the values $a_i$ and $b_i$ are defined above.
\end{pro}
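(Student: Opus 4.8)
The system \eqref{ab} is already the finite, two-variable reduction of the full boundary-law equation furnished by Proposition \ref{pps: 1} together with the recurrences \eqref{L}--\eqref{Re}, so the entire task is to count the positive solutions $(a,b)$ of \eqref{ab} with $k=2$, up to the cyclic-permutation symmetry of the sequence \eqref{pab}, which merely interchanges $a$ and $b$. The plan is to use the factorization $(b-a)\big[(a+b)^2-\tau(a+b)+\tau\big]=0$ obtained by subtracting the two equations, which splits the solution set into the diagonal case $a=b$ and the two off-diagonal branches $a+b=\tfrac12\big(\tau\pm\sqrt{\tau^2-4\tau}\big)$. I would treat each piece separately, recording for which $\tau$ it yields real positive solutions, and then assemble the totals while discarding duplicates.

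For the diagonal case $a=b$ the relevant equation is the cubic \eqref{a3}. Since $a=1$ solves it for every $\tau$, I would factor out $(a-1)$, leaving the quadratic $2a^2-(\tau-2)a+2=0$ whose discriminant is $(\tau-2)^2-16$. This is negative for $2<\tau<6$, vanishes at $\tau=6$ (giving the double root $a=1$), and is positive for $\tau>6$; in the latter regime the two extra roots $a_1,a_2$ have product $1$ and positive sum, hence are positive and reciprocal (so they produce the same GGM, cf.\ Figure \ref{fn1}). The diagonal case therefore contributes exactly one solution for $\tau\le 6$ and exactly three for $\tau>6$.

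For each off-diagonal branch I would substitute the corresponding value of $a+b$ into one equation of \eqref{ab}, obtaining a quadratic in $a$ whose two roots sum to $a+b$; consequently if $a$ is one root then $b$ is the other, so each branch produces a \emph{single} solution up to the $a\leftrightarrow b$ swap. The branches are real only for $\tau\ge 4$. The branch $\tfrac12\big(\tau+\sqrt{\tau^2-4\tau}\big)$ then gives, for $\tau>4$, a genuinely $4$-periodic positive pair $(a_3,b_3)$ (positivity from the signs of the sum and product of the roots, and $a_3\neq b_3$ because the pertinent discriminant $\tau^2-4\tau+4\sqrt{\tau^2-4\tau}$ stays strictly positive). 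The branch $\tfrac12\big(\tau-\sqrt{\tau^2-4\tau}\big)$ requires in addition that $s:=\sqrt{\tau^2-4\tau}$ obey $s(s-4)\ge 0$, i.e.\ $\tau\ge 2+2\sqrt5$, and first yields a positive pair $(a_5,b_5)$ there.

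It then remains to assemble the four regimes by summing the diagonal and off-diagonal contributions and to confirm cross-branch distinctness: the two off-diagonal solutions are separated by their value of $a+b$, and neither coincides with a diagonal solution once $\tau$ lies strictly above the relevant threshold. The hard part will be the bookkeeping at the boundary values $\tau\in\{4,6,2+2\sqrt5\}$, where solutions merge: at $\tau=4$ the plus-branch collapses onto $a=b=1$; at $\tau=6$ the pair $a_1,a_2$ emerges from $a_0=1$; and at $\tau=2+2\sqrt5$ the minus-branch degenerates to $a_5=a_6$, which one must check equals $a_1$, so that the fifth solution is truly distinct only for $\tau$ strictly above the threshold. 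Verifying these nested-radical coincidences carefully is the single delicate point; the rest is the routine enumeration described above.
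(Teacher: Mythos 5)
Your proposal follows exactly the paper's route: subtract the two equations of \eqref{ab}, factor out $(b-a)$, and treat the diagonal cubic \eqref{a3} and the two off-diagonal quadratics separately, counting positive roots via discriminants and Vieta's formulas and identifying $(a,b)$ with $(b,a)$. The one point where you go beyond the paper is the boundary value $\tau=2+2\sqrt{5}$, and your suspicion is justified: there $\sqrt{\tau^2-4\tau}=4$, so $a_5=b_5=\tau/(\tau+4)=\tfrac{1}{2}(\sqrt{5}-1)=a_1$, i.e.\ the minus-branch solution coincides with the diagonal solution $a=b=a_1$, so at that single parameter value there are only four distinct solutions; item 4 of the Proposition should therefore read $\tau>2+2\sqrt{5}$, a threshold case that the paper's own proof does not examine.
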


\subsection{Gradient Gibbs measures described by $4$-periodic boundary laws: Identifiability} \label{sec:ident}  In this subsection we will apply the Lemmas \ref{lem: 2-per} and \ref{lem: 4-per} on identifiability to the gradient Gibbs measures which correspond to the periodic solutions given in Proposition \ref{pps}.
Note that a solution described by the parameters $(a,b)$ corresponds to the boundary law
\begin{equation} \label{4-per: specific form}
z_n^{(a^2,b^2)}=\left\{ \begin{array}{lll}
1, \ \ \mbox{if} \ \  n=0 \ \ \mbox{or} \ \ 2 \mod 4,\\[2mm]
a^2, \ \ \mbox{if} \ \ n=1 \mod 4,\\[2mm]
b^2, \ \ \mbox{if} \ \ n=3 \mod 3.
\end{array}
\right.
\end{equation}
We will denote the GGM assigned by Theorem \ref{thm: constrGGM} to a boundary law $z_n^{(a^2,b^2)}$ by $\nu^{(a^2,b^2)}$. \\
%
%

	{\bf Case $4< \tau \leq 6$}: We have $a_3^2+b_3^2 \geq 2(\frac{1}{2}(a_3+b_3))^2=2(\frac{\tau}{\tau-\sqrt{\tau^2-4\tau }}) ^2>2=1^2+1^2$, \\ so $a_3^2+b_3^2 \neq 1^2+1^2$ and $(1^2+1^2)(a_3^2+b_3^2) \neq 4$. Thus $\nu^{(a_3^2,b_3^2)} \neq \nu^{(1,1)} $ by Lemma \ref{lem: 4-per}.\\

	{\bf Case $6 < \tau < 2+2\sqrt{5}$ }:  We have $a_1<1<a_2$ and $a_3^2+b_3^2 > 2(\frac{1}{16}(\tau+\sqrt{\tau^2-4\tau})^2) \geq 2a_2^2$.  Further, as $a_1a_2 \equiv 1$, by Lemma \ref{lem: 2-per} we have that $\nu^{(a_1^2,a_1^2)} \equiv \nu^{(a_2^2,a_2^2)}$. At last $(a_3^2+b_3^2)(a_2^2+a_2^2)>(a_3^2+b_3^2)(a_1^2+a_1^2)>4(a_1a_2)^2=4$. Thus we have {\bf three} different GGMs  associated to boundary laws of the type \eqref{4-per: specific form} via Theorem \ref{thm: constrGGM}.\\

	{\bf Case $\tau \geq 2+2\sqrt{5}$ }: We still have: $a_1^2<1<a_2^2<a_3^2+b_3^2$ and $a_1a_2 \equiv 1$, so again $\nu^{(a_1^2,a_1^2)} \equiv \nu^{(a_2^2,a_2^2)}$. Further $a_5^2+b_5^2 \leq 2b_5^2 < 2$. As $a_1(\tau)$ is monotonically decreasing in $\tau$ and $a_5(\tau)^2+b_5(\tau)^2$
is monotonically increasing in $\tau$ it suffices to numerically calculate $2a_1(2+2\sqrt{5})^6=(\frac{1}{2}(\sqrt{5}-1))^2<0.4<0.76<a_5(2+2\sqrt{5})^2+b_5(2+2\sqrt{5})^2$ to obtain:
	$2a_1^2<a_5^2+b_5^2 <2<2a_2^2<a_3^2+b_3^2$. Thus we have {\bf four} different GGMs  associated to boundary laws of the type \eqref{4-per: specific form} via Theorem \ref{thm: constrGGM}. \\

Hence we have proven the following 	
\begin{thm}\label{tps} For the SOS model (\ref{nu1}) on the binary tree with parameter $\tau=2 \cosh(\beta)$ the following assertions hold
	\begin{itemize}
		\item[1.] If $\tau \leq 4$ then there is precisely one GGM associated to a boundary law of the type \eqref{4-per: specific form} via Theorem \ref{thm: constrGGM}.
		\item[2.] If $4< \tau \leq 6$  then there are precisely two such GGMs.
		\item[3.] If $6<\tau < 2+2\sqrt{5}$  then there are precisely three such GGMs.
		\item[4.] If $\tau \geq  2+2\sqrt{5}$  then there are precisely four such measures.
	\end{itemize}
\end{thm}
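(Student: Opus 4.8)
The plan is to combine Proposition \ref{pps}, which counts the distinct $4$-periodic solutions $(a,b)$ of the recurrence \eqref{V} in each $\tau$-regime, with the identifiability Lemmas \ref{lem: 2-per} and \ref{lem: 4-per}, which tell us when two such solutions yield the \emph{same} GGM. The key translation is that a solution $(a,b)$ to \eqref{V} corresponds, via $z_n = u_n^k$, to the boundary law \eqref{4-per: specific form} with entries $(a^2,b^2)$, so I should feed the pairs $(a^2,b^2)$ (not $(a,b)$) into the identifiability criteria. Thus the whole proof is a bookkeeping argument: list the solutions provided by Proposition \ref{pps} for each of the four $\tau$-intervals, and then decide how many equivalence classes they fall into under the relation ``$\nu^{(a_1^2,b_1^2)}=\nu^{(a_2^2,b_2^2)}$''.

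First I would recall the two collapsing mechanisms. By Lemma \ref{lem: 4-per}, two $4$-periodic GGMs can coincide only if $a_i^2+b_i^2=a_j^2+b_j^2$ or $(a_i^2+b_i^2)(a_j^2+b_j^2)=4$; and by Lemma \ref{lem: 2-per}, the two genuinely $2$-periodic solutions $a=b=a_1$ and $a=b=a_2$ always give the same GGM because $a_1a_2=1$ (visible in Figure \ref{fn1}). So the strategy in each regime is: compute or estimate the invariant $s_i:=a_i^2+b_i^2$ for every solution, check that distinct solutions have $s_i\neq s_j$ and $s_is_j\neq 4$ (so they stay distinct), except for the $a_1/a_2$ pair which always merges. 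Concretely, in the case $4<\tau\le 6$ there are two solutions $(1,1)$ and $(a_3,b_3)$; I would show $s_3=a_3^2+b_3^2>2=s_{(1,1)}$ and $s_3\cdot 2\neq 4$, giving two GGMs. In the case $6<\tau<2+2\sqrt5$ the four solutions $(1,1),(a_1,a_1),(a_2,a_2),(a_3,b_3)$ collapse to three: the $a_1,a_2$ pair merges via Lemma \ref{lem: 2-per}, while $(1,1)$ and $(a_3,b_3)$ stay separate from each other and from the merged class by the same inequalities. In the final case $\tau\ge 2+2\sqrt5$ there are five solutions, with $(a_1,a_1)\sim(a_2,a_2)$ again, leaving four classes once I verify the ordering $2a_1^2<a_5^2+b_5^2<2<2a_2^2<a_3^2+b_3^2$.

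The main obstacle is precisely establishing that ordering of invariants cleanly, since it requires sign- and size-estimates on the explicit radical expressions for $a_3,b_3,a_5,b_5$ as functions of $\tau$. The clean way, which the excerpt hints at, is to use monotonicity: $a_1(\tau)$ is decreasing and $a_5(\tau)^2+b_5(\tau)^2$ is increasing in $\tau$, so it suffices to check the separating inequalities at the single endpoint $\tau=2+2\sqrt5$ by a numerical evaluation (e.g. $2a_1(2+2\sqrt5)^6=(\tfrac12(\sqrt5-1))^2<0.4<0.76<a_5(2+2\sqrt5)^2+b_5(2+2\sqrt5)^2$), and to use $a+b=\tfrac12(\tau-\sqrt{\tau^2-4\tau})$ together with $a_3^2+b_3^2\ge \tfrac12(a_3+b_3)^2$ to bound the large invariant from below. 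I would also keep in mind the subtlety that Lemma \ref{lem: 4-per} gives only a \emph{necessary} condition for coincidence, so strictly it can merge measures that are actually distinct; this only matters for proving an \emph{upper} bound on the number of GGMs, whereas here it is used to separate them, which is the safe direction. Once the inequalities are in place the theorem follows by simply counting equivalence classes in each regime, matching the four cases $1,2,3,4$.
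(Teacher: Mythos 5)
Your proposal is correct and follows essentially the same route as the paper's own argument in Subsection~\ref{sec:ident}: translate each solution $(a,b)$ of Proposition~\ref{pps} into the boundary law with entries $(a^2,b^2)$, merge the two $2$-periodic solutions via Lemma~\ref{lem: 2-per} using $a_1a_2=1$, and separate the remaining classes by checking the necessary conditions of Lemma~\ref{lem: 4-per} on the invariants $a^2+b^2$, with the same monotonicity-plus-endpoint numerical check in the regime $\tau\ge 2+2\sqrt{5}$. Your remark that Lemma~\ref{lem: 4-per} is only a necessary condition and is therefore used in the ``safe'' direction (for separation, with the merging handled by the if-and-only-if Lemma~\ref{lem: 2-per}) is exactly the logic the paper relies on.
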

\subsection{3-periodic boundary laws on the $k$-regular tree.} To also describe gradient Gibbs measures on the $k$-regular tree for arbitrary $k \geq 2$, we consider a $1$-parameter family of 3-periodic boundary laws which can be examined easily.
Assume $u_n$, $n\in Z$ has the form
\begin{equation}\label{u3p}
u_n=\left\{\begin{array}{ll}
1, \ \ n=0\mod 3\\
a, \ \ n\ne 0\mod 3,\\
\end{array}\right.
\end{equation}
where $a>0$.
Then, by (\ref{Re}) and \eqref{L},  $a$ should satisfy
\begin{equation}\label{e3} 2a^{k+1}-\tau a^k+(\tau-1) a-1=0.
\end{equation}
This equation has the solution $a=1$ independently on the parameters $(\tau, k)$. Dividing both sides by $a-1$
we get
\begin{equation}\label{uy22}
2a^k+(2-\tau)(a^{k-1}+a^{k-2}+\dots+a)+1=0.
\end{equation}
The equation (\ref{uy22}) has again the solution $a=1$ iff $\tau=\tau_0$, where
$$\tau_0:={2k+1\over k-1}.$$

 It is well known (see \cite{Pra}, p.28) that the number of positive
roots of the polynomial (\ref{uy22}) does not exceed the number of sign
changes of its coefficients.
It is obvious that $2-\tau<0$. Thus the number of positive roots of
the polynomial (\ref{uy22}) is at most 2.

The following lemma gives the full analysis of the equation (\ref{uy22}):

\begin{lemma}\label{l6} For each $k\geq 2$,
there is exactly one critical value of $\tau=2\cosh(\beta)$, called $\tau_c=\tau_c(k)$, such that
\begin{itemize}
\item[1.] $\tau_c<\tau_0$;
\item[2.]  if $\tau<\tau_c$ then (\ref{uy22}) has no positive solution;
\item[3.] if $\tau=\tau_c$ then the equation has a unique positive solution;
\item[4.] if $\tau>\tau_c$, $\tau\ne \tau_0$
then it has exactly two solutions;
\item[5.] if $\tau=\tau_0$, then the equation has two solutions, one of which is $a=1$.
\end{itemize}
\end{lemma}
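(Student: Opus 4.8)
The plan is to reduce the whole statement to a monotonicity analysis of a single auxiliary function. Since $\tau>2$, the denominator below is positive for every $a>0$, so I would first rewrite \eqref{uy22} by solving for $\tau$: a number $a>0$ solves \eqref{uy22} for a given $\tau$ if and only if $\tau=g(a)$, where
\[ g(a):=2+\frac{2a^k+1}{D(a)},\qquad D(a):=a+a^2+\dots+a^{k-1}. \]
Thus the number of positive solutions of \eqref{uy22} at a given $\tau$ equals the number of preimages of $\tau$ under $g$, and the entire lemma becomes a statement about the shape of the graph of $g$ on $(0,\infty)$.

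Next I would record the boundary behaviour: as $a\to0^+$ the numerator tends to $1$ while $D(a)\to0^+$, and as $a\to+\infty$ one has $g(a)\sim 2a$, so $g(a)\to+\infty$ at both ends. Being continuous on $(0,\infty)$ with $g\to+\infty$ at both ends, $g$ attains a global minimum, and I would \emph{define} $\tau_c$ to be this minimal value. To show the minimum is attained at a single point and that $g$ is unimodal, I would examine the sign of $g'$. Writing $g'(a)=N(a)/D(a)^2$ with
\[ N(a)=2k\,a^{k-1}D(a)-(2a^k+1)D'(a), \]
the key computation is to expand $N$ explicitly. Collecting terms I expect the coefficient of $a^m$ to be $2(2k-m-1)>0$ for $k\le m\le 2k-2$, to vanish at $a^{k-1}$, and to be $-(i+1)<0$ for $a^i$ with $0\le i\le k-2$. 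The resulting sign sequence $(-,\dots,-,0,+,\dots,+)$ has exactly one sign change, so by Descartes' rule $N$ has at most one positive root; since $N<0$ near $0$ and $N>0$ near $+\infty$, it has exactly one, say $a_*$, with $N<0$ on $(0,a_*)$ and $N>0$ on $(a_*,\infty)$. Hence $g$ strictly decreases on $(0,a_*)$ and strictly increases on $(a_*,\infty)$, so $\tau_c=g(a_*)$ is attained only at $a_*$. This immediately yields parts 2--4: for $\tau<\tau_c$ there is no solution; for $\tau=\tau_c$ exactly the one solution $a_*$; and for $\tau>\tau_c$ exactly two, one on each branch by the intermediate value theorem, with no further solutions since $g$ is strictly monotone on each of $(0,a_*)$ and $(a_*,\infty)$.

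It remains to locate $a_*$ relative to $1$ in order to obtain parts 1 and 5. I would evaluate $g(1)=2+\frac{3}{k-1}=\tau_0$, recovering the already-noted fact that $a=1$ solves \eqref{uy22} precisely when $\tau=\tau_0$. Substituting $a=1$ into the explicit form of $N$ gives $N(1)=k(k-1)-\tfrac{k(k-1)}{2}=\tfrac{k(k-1)}{2}>0$, so $1$ lies on the increasing branch, i.e.\ $a_*<1$. By strict monotonicity on $(a_*,\infty)$ this forces $\tau_c=g(a_*)<g(1)=\tau_0$, which is part 1; and since $\tau_0>\tau_c$, part 4 gives exactly two solutions at $\tau=\tau_0$, one of which is $a=1$, establishing part 5.

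The main obstacle is the explicit expansion of $N(a)$ and the verification that its coefficient signs follow the pattern $(-,\dots,-,0,+,\dots,+)$ with a single sign change; once this is in hand, everything else follows routinely from unimodality together with the two evaluations $g(1)=\tau_0$ and $N(1)>0$.
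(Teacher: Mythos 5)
Your proposal is correct and follows essentially the same route as the paper: both solve \eqref{uy22} for $\tau$ as a function $\psi_k(a)=2+(2a^k+1)/(a+\dots+a^{k-1})$, establish unimodality by showing the numerator of the derivative has a single coefficient sign change (hence one positive root by Descartes' rule), define $\tau_c$ as the minimum, and compare with $\psi_k(1)=\tau_0$. Your explicit check that $N(1)=k(k-1)/2>0$ (placing $a_*<1$) is a slightly more detailed version of the paper's observation that $a=1$ fails to satisfy the critical-point equation, but the argument is the same.
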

\begin{proof}
Solving (\ref{uy22}) with respect to $\tau$ we get
$$\tau=\psi_k(a):=2+{2a^k+1\over a^{k-1}+a^{k-2}+\dots+a}. $$
We have $\psi_k(a)>2$, $a>0$ and $\psi_k'(a)=0$ is equivalent to
\begin{equation}\label{ho}
2\sum_{j=1}^{k-1}(k-j)a^{k+j-1}-\sum_{j=1}^{k-1}ja^{j-1}=0.
\end{equation}
The last polynomial equation has exactly one positive solution,
because signs of its coefficients changed only one time,
and at $a=0$ it is negative, i.e. -1 and at $a=+\infty$ it is positive.
Denote this unique solution by $a^*$. Then $\psi_k(a)$ has unique minimum
at $a=a^*$, and $\lim_{a\to 0}\psi_k(a)=\lim_{a\to +\infty}\psi_k(a)=+\infty$ (see Fig.\ref{fn3}).
\begin{figure}
\includegraphics[width=8cm]{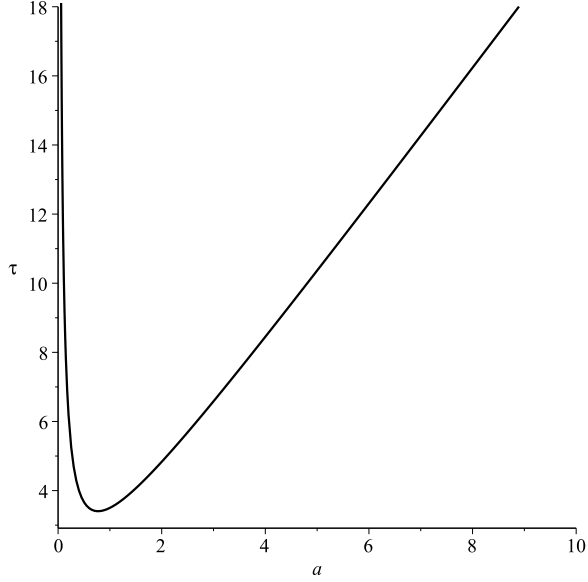}
\caption{ The graphs of the function $\psi_3(a)$.}\label{fn3}
\end{figure}
Thus
$$\tau_c=\tau_c(k)=\min_{a>0}\psi_k(a)=\psi_k(a^*).$$
Note that $a^*\ne 1$, i.e. $a=1$ does not satisfy (\ref{ho}).
Therefore $$\tau_0=\psi_k(1)>\tau_c=\min_{a>0}\psi_k(a)=\psi_k(a^*).$$
These properties of $\psi_k(a)$ completes the proof.
\end{proof}
Thus taking into account Lemma \ref{lem: 3-per} and Lemma \ref{l6} we obtain the following:
\begin{thm} For the SOS-model on the $k$-regular tree, $k \geq 2$, with parameter $\tau=2\cosh(\beta)$ there are numbers $0<\tau_c<\tau_0$ such that the following holds: 
\begin{itemize}
	\item[1.] If $\tau<\tau_c$ then there are no GGM corresponding to 
	nontrivial $3$-periodic boundary laws of the type \eqref{u3p} via Theorem \ref{thm: constrGGM}. 
	\item[2.] At $\tau=\tau_c$ there is a unique GGM corresponding to a
	nontrivial $3$-periodic boundary law of the type \eqref{u3p} via Theorem \ref{thm: constrGGM}.
	\item[3.] For $\tau>\tau_c$, $\tau\ne \tau_0$ (resp. $\tau=\tau_0$) there are exactly
	two such (resp. one) GGMs. 
\end{itemize}	
The GGMs described above are all different from the GGMs mentioned in Theorem \ref{tps}.
\end{thm}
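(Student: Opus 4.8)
The plan is to obtain this theorem as a direct synthesis of the counting result of Lemma~\ref{l6} and the identifiability results of Lemma~\ref{lem: 3-per}, so that the argument is essentially organizational rather than computational. The two numbers $0<\tau_c<\tau_0$ are already supplied: $\tau_0=(2k+1)/(k-1)$ is the value at which $a=1$ solves \eqref{uy22}, $\tau_c=\min_{a>0}\psi_k(a)$, and Lemma~\ref{l6} establishes $\tau_c<\tau_0$ while $\psi_k>2$ forces $\tau_c>2>0$.

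First I would make the translation between the two descriptions explicit. A $3$-periodic boundary law of the form \eqref{u3p} is parametrized by a single positive number $a$, and through the relation $z_i=u_i^k$ it corresponds to a boundary law of the type treated in Lemma~\ref{lem: 3-per} with $c=a^k$. Such a law is nontrivial exactly when $a\neq 1$, equivalently $c\neq 1$. By \eqref{L} and \eqref{Re}, the admissible values of $a$ are precisely the positive roots of \eqref{e3}; dividing out the ever-present root $a=1$ reduces this to \eqref{uy22}, whose positive roots parametrize the candidate boundary laws (the value $a=1$ reappearing as a root of \eqref{uy22} only in the degenerate case $\tau=\tau_0$).

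Next I would read off the number of nontrivial solutions directly from Lemma~\ref{l6}: for $\tau<\tau_c$ there is no positive root of \eqref{uy22}; at $\tau=\tau_c$ there is exactly one, and it is nontrivial since $a=1$ solves \eqref{uy22} only when $\tau=\tau_0$ and $\tau_c<\tau_0$; for $\tau>\tau_c$ with $\tau\neq\tau_0$ there are two roots, both distinct from $1$; and at $\tau=\tau_0$ there are two roots, one being the trivial $a=1$, leaving a single nontrivial one. To convert these solution counts into GGM counts I would use injectivity of $a\mapsto a^k$ on $(0,\infty)$, so that distinct roots $a$ yield distinct parameters $c$, together with part~(b) of Lemma~\ref{lem: 3-per}, which states $\nu^{(c_1)}=\nu^{(c_2)}$ iff $c_1=c_2$. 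Hence each nontrivial root gives its own distinct GGM, the GGM count equals the count of nontrivial roots, and the four cases of the statement follow.

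Finally, the assertion that these GGMs differ from every measure in Theorem~\ref{tps} is exactly part~(c) of Lemma~\ref{lem: 3-per}, since the measures of Theorem~\ref{tps} are by construction the GGMs attached to the $4$-periodic boundary laws of Lemma~\ref{lem: 4-per}. The only genuinely delicate bookkeeping point, and the one I would watch most carefully, is the handling of the trivial root $a=1$ at the threshold $\tau=\tau_0$: there Lemma~\ref{l6} produces two roots of \eqref{uy22}, but one coincides with the trivial (spatially homogeneous) boundary law and must be discarded, which is precisely why $\tau=\tau_0$ yields one GGM rather than two. Everything else is a routine application of the two lemmas.
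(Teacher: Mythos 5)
Your proposal is correct and follows essentially the same route as the paper, which derives the theorem directly by combining the root count of Lemma \ref{l6} with the identifiability statements (b) and (c) of Lemma \ref{lem: 3-per}; you merely make explicit the bookkeeping (the correspondence $c=a^k$, the injectivity of $a\mapsto a^k$, and the discarding of the trivial root $a=1$ at $\tau=\tau_0$) that the paper leaves implicit.
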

\begin{rk} In case $k=2$ one easily finds $\tau_c(2)=2(1+\sqrt{2})\approx 4.83$ and $\tau_0=5$.
Two positive solutions are (see Fig. \ref{fn1}):
$$a=b_{1,2}:={1\over 4}(\tau-2\pm\sqrt{(\tau-2)^2-8})$$
\end{rk}
\begin{rk}
It was shown in Section 5 of \cite{KS} that the equations for $3$-periodic boundary laws of the general form 
\begin{equation}
z_i=\left\{\begin{array}{ll}
1, \ \ i=0\mod 3\\
a, \ \ i= 1\mod 3\\
b, \ \ i=2 \mod 3,\\ 
\end{array}\right.
\end{equation} 
can be identified with the boundary law equations of a Potts model on the same regular tree at a different effective inverse temperature. An explicit discussion of the transition temperature was given only on the binary tree. This correspondence however explains that all $3$-periodic boundary law solutions are of the type \eqref{u3p}.
\end{rk}

 \section*{ Acknowledgements}

 U.A. Rozikov thanks the  RTG 2131, the research training
group on High-dimensional Phenomena in Probability - Fluctuations and Discontinuity,
 and the Ruhr-University Bochum (Germany) and  University Paris Est Cr\'eteil (France), for financial support and hospitality. 
 
 F. Henning thanks the RTG 2131.
 
 The research of A. Le Ny has been partly funded by the B\'ezout Labex, funded by ANR, reference ANR-10-LABX-58.


\begin{thebibliography}{99}
\bibitem{BiKo}  M. Biskup,  R. Koteck\'{y}, \textit{Phase coexistence of gradient Gibbs states,} Probab. Theory Relat. Fields \textbf{139}(1-2) (2007), 1--39.
\bibitem{BEvE} R. Bissacot, E.O. Endo, A.C.D. van Enter, \textit{Stability of the phase transition of critical-field Ising model on Cayley trees under inhomogeneous external fields}, Stoch. Process. Appl. \textbf{127}(12)  (2017), 4126--4138.
 \bibitem{BK} A. Bovier, C. K\"ulske, \textit{A rigorous renormalization group method for interfaces in random media},  Rev. Math. Phys. \textbf{6}(3) (1994), 413--496.

 \bibitem{BK1} A. Bovier, C. K\"ulske, \textit{There are no nice interfaces in $2+1$ dimensional SOS-models in random media},
 J. Stat. Phys. \textbf{83} (1996),  751--759.
\bibitem{CoKu12} C. Cotar, C. K\"ulske, \textit{Existence of random gradient states,} Ann. Appl. Probab. \textbf{22}(4) (2012), 1650--1692. 
\bibitem{CoKu15} C. Cotar, C. K\"ulske, \textit{Uniqueness of gradient Gibbs measures with disorder}, Probab. Theory Relat. Fields \textbf{162}(3) (2015), 587--635.
 \bibitem{Cox} J.T. Cox. \textit{Entrance laws for Markov chains}. Ann. Probab. \textbf{5}(3) (1977), 533-549.
 
\bibitem{EnKu08}
A.C.D. van Enter, C.K\"ulske, \textit{Non-existence of random gradient Gibbs measures in continuous interface models in $d=2$,} Ann. Appl. Probab.  {\bf 18 } (2008) 109--119. 
 
 \bibitem{FS97} T. Funaki, H. Spohn, {\em Motion by mean curvature from the Ginzburg-Landau $\nabla \varphi$ interface model}, Comm. Math. Phys., \textbf{185}, 1997.

 \bibitem{Ge} H.O. Georgii, \textit{Gibbs Measures and Phase Transitions},  Second edition. de Gruyter Studies in Mathematics, 9. Walter de Gruyter, Berlin, 2011.

 \bibitem{Kel} F.P. Kelly, \textit{Stochastic models of computer communication systems.
 With discussion},  J. Roy. Stat. Soc. Ser. B \textbf{47} (1985), 379--395;
 415--428.


%

\bibitem{KR1} C. K\"ulske,  U.A. Rozikov, \textit{Extremality of translation-invariant phases for a three-state SOS-model on the binary tree},  J. Stat. Phys. \textbf{160}(3) (2015), 659-680.


\bibitem{KS} C. K\"ulske, P. Schriever. \textit{Gradient Gibbs measures and fuzzy transformations on trees}, Markov Process. Relat. Fields \textbf{23}, (2017), 553-590.
    
 \bibitem{Maz} A.E. Mazel, Y.M. Suhov, \textit{Random surfaces with two-sided constraints: an application of the theory of dominant ground states}, J. Stat. Phys. \textbf{64} (1991), 111--134.
     
\bibitem{Pra} V.V. Prasolov, \textit{Polynomials}. Spinger, Berlin. 2004



 \bibitem{Ra} K. Ramanan, A. Sengupta, I. Ziedins, P.
 Mitra, \textit{Markov random field models of multicasting in
 tree networks},  Adv. Appl. Probab. \textbf{34} (2002), 58--84.

 \bibitem{Ro12} U.A. Rozikov, Y.M. Suhov, \textit{Gibbs measures for SOS model on a Cayley tree},  Infin. Dimens. Anal.
 Quantum Probab. Relat. Top. \textbf{9}(3) (2006), 471--488.

 \bibitem{Ro13} U.A. Rozikov, S.A. Shoyusupov, \textit{Gibbs measures for the SOS model with four states on a Cayley
 tree},  Theor. Math. Phys. \textbf{149}(1) (2006), 1312--1323.

  \bibitem{Ro} U.A.  Rozikov, \textit{Gibbs measures on Cayley trees}.  World Sci. Publ. Singapore. 2013.
  \bibitem{Ro08}  U.A.  Rozikov, \textit{A contour method on Cayley tree}.  J. Stat. Phys. \textbf{130} (2008), 801--813.


  \bibitem{Z1} S. Zachary. \textit{Countable state space Markov random fields and Markov chains on trees},  Ann. Probab. \textbf{11}(4) (1983), 894--903.

  \bibitem{Z2} S. Zachary. \textit{Bounded, attractive and repulsive Markov specifications on trees and on the one-dimensional lattice}. Stoch. Process. Appl. \textbf{20} (1985) 559--581.


   \end{thebibliography}
\end{document}